\theoremstyle{plain}
\newtheorem{thm}{Theorem}[section]
\newtheorem{pro}[thm]{Problem}
\newtheorem{lem}[thm]{Lemma}
\theoremstyle{definition}
\newtheorem{defn}{Definition}[section]
\newtheorem{ass}{Assumption}[section]
\newcommand{\eps}{\varepsilon}
\newcommand{\la}{\langle}
\newcommand{\ra}{\rangle}
\makeatletter\@addtoreset{equation}{section} \makeatother
\begin{document}

\title{  Stochastic Evolution Equations of Jump Type with Random Coefficients: Existence, Uniqueness and Optimal Control
 \thanks{This work was supported by the Natural Science Foundation of Zhejiang Province
for Distinguished Young Scholar  (No.LR15A010001),  and the National Natural
Science Foundation of China (No.11471079, 11301177) }}

\date{}

   \author{Maoning Tang
 \hspace{1cm}  Qingxin Meng\thanks{Corresponding author.   E-mail: mqx@zjhu.edu.cn}
\hspace{1cm}
\\\small{Department of Mathematics, Huzhou University, Zhejiang 313000, China}}

\maketitle

\begin{abstract}

We study a class of stochastic evolution 
equations of jump type with random coefficients and its optimal control problem. There are
three major ingredients. The first is to 
prove 
the existence and uniqueness of the solutions by continuous dependence theorem of solutions combining with  the  parameter extension method. The second is
to establish the stochastic maximum principle
and verification theorem for our optimal control problem by the classic 
convex variation method and dual technique.
The third is to represent an example of   a Cauchy problem for a controlled stochastic partial differential equation with jumps which our theoretical results can solve.

\end{abstract}

\textbf{Keywords}: Stochastic Evolution Equation; Poisson Random Martingale Measure;  Stochastic Maximum Principle; Verification Theorem

\maketitle

\section{ Introduction }

In this paper,  we study the  following
stochastic evolution equation with
jump  \begin{eqnarray} \label{eq:1.1}
  \left\{
  \begin{aligned}
   d X (t) = & \ [ A (t) X (t) + b ( t, X (t)) ] d t
+ [B(t)X(t)+g( t, X (t)) ]d W(t)
 +\int_E \sigma (t, X(t),e)\tilde \mu(de,dt),  \\
X (0) = & \  x , \quad t \in [ 0, T ]
  \end{aligned}
  \right.
\end{eqnarray}
in the framework of a Gelfand triple $V \subset H= H^*\subset V^{*},$  where $ H$, $V$ are Hilbert spaces. Here $W$  is  a  one-dimensional  Brownian motion  and  $\tilde \mu$
is a Poisson random martingale measure
on  a filtrated probability  $(\Omega, \mathscr{F},\{
{\mathscr F}_t\}_{0\leq t\leq T}, P),$
$A:[0,T]\times \Omega \longrightarrow {\mathscr L} (V, V^*)$, $B
  : [0,T]\times \Omega \longrightarrow {\mathscr L} (V, H ),$ $b:[0,T]\times  \Omega\times H  \longrightarrow H$, $g:[0,T]\times\Omega\times H  \longrightarrow H$ and $\sigma:[0,T]\times  \Omega \times
   E\times H  \longrightarrow H$  are given random mappings. Here we denote by  $\mathscr{L}(V,V^*)$ the space of bounded
linear transformations of V into $V^*$, by ${\mathscr L} (V, H)$   the space of bounded
linear transformations of  $H$ into $V$.
    An adapted solution of
   \eqref{eq:1.1} is  a $V$-valued, $\{{\mathscr F}_t\}_{0\leq t\leq T}$-adapted process $X(\cdot)$
    under some appropriate sense.

Stochastic partial differential equations and stochastic evolution equations
driven  only by Wiener processes  have been
 investigated in depth  and a great deal of advances have been made by many authors, see, for example the monographs [5],[6],[23]  for their general
 theory. Most recently, thanks to comprehensive practical applications,  many attentions have been paid to stochastic partial differential equations driven by jump processes, (cf., for example,[1],[2],[4],[10],[21],[22],[25],[26],[28],
[29]  and the references therein).    It is worth mentioning that R\"{o}cker and Zhang [23]  established the existence and uniqueness theory for solutions of stochastic evolution equations of type (1.1) by a successive approximations, in which case  the operator does not exist and  all coefficients involved  are deterministic mappings.

The purpose of this paper is to show
 the existence and uniqueness of solutions to  the stochastic evolution equation \eqref{eq:1.1}
 and establish the corresponding
 maximum principle and verification theorem
 for the optimal control problem where the state is driven by a  controlled stochastic evolution equation \eqref{eq:1.1}.
  The main feature of this paper is that all coefficients of \eqref{eq:1.1} are allowed to be random and time varying. Moreover, different from Picard iteration approach  adopted by R\"{o}cker and Zhang [25] to prove the existence and uniqueness results, our approach is that we first establish  continuous dependence theorem of solutions and then combine  the  parameter extension method to construct a contractive mapping.  For  optimal control problems of stochastic evolution equation or stochastic partial differential equation, many works are concerned with systems driven only by Wiener  process  and the corresponding stochastic maximum
principles are establish, see e.g.[3],[7],[9],[11],[12],[14],[15],[16],[17],
[18],[20],[30].
 In contrast, there have not been very much
  results on  the optimal control for stochastic partial
differential equations driven by jump processes. In 2005, Oksendal etl [23]  studied
the optimal control problem for a stochastic
reaction diffusion equation driven by Poisson random measure  in which
case  the operator $B$ is absence and all the coefficients are deterministic.
 Thus  our system  cover
 the model studied by {\O}ksendal etl [23] and  is more general  in such a way that all coefficients are random and time-varying.

The rest of this
paper is structured as follows. In Section 2,
 we provide the basic notations and recall It\^{o} formula for jump diffusion in Hilbert space used  frequently in this paper.
 Section 3  establishes the existence and
 uniqueness of solutions to the
 stochastic evolution equation \eqref{eq:1.1}.
 Section 4 formulates the optimal control problem specifying the hypotheses. In section 5, adjoint equation is introduced which turns out to be a backward stochastic
evolution equation with jumps. In Section 6, we establish the  stochastic  maximum principle by the classic convex variation method. In Sections 7, the verification theorem for optimal controls  is obtained by dual technique. In section 8, we present one example of application of our results.

\section{Notations and It\^{o} Formula for Jump Diffusion in Hilbert Space  }
In this section, we first introduce the notations which will be  used in our paper.
Let $(\Omega, \mathscr{F}, P)$ be a complete probability space
equipped with a  one-dimensional standard Brownian motion $\{W(t),
0\leq t\leq T\}$  and  a stationary Poisson point process
$\{\eta_t\}_{t\geq 0}$ defined  on a fixed nonempty Borel measurable
subset ${E}$ of $\mathbb R^1$.
 Denote by $\mathbb E[\cdot]$ the expectation
under the probability $\mathbb P.$
 We denote by $\mu(de,dt)$
 the counting measure induced by $\{\eta_t\}_{t\geq 0}$ and
  by  $\nu(de)$ the corresponding
 characteristic measure.  Then the compensatory
 random martingale measure is denoted by
  $\tilde{\mu}(de, dt):={\mu}(de,
dt)-\nu(de)dt$ which is assumed to be independent of the Brownian
motion.
  Furthermore, we assume that
$\nu({E})<\infty$. Let $\{{\mathscr F}_t\}_{0\leq t\leq T}$ be the
P-augmentation of the natural filtration generated by
$\{{W_t}\}_{t\geq 0}$ and $\{\eta_t\}_{t\geq 0}$.
 By  $\mathscr{P}$  we denote the
predictable $\sigma$ field on $\Omega\times [0, T]$ and
by $\mathscr B(\Lambda)$
  the Borel $\sigma$-algebra of any topological space $\Lambda.$
  Let $X$ be  a  separable Hilbert space with norm $\|\cdot\|_X$.
  Denote by  $M^{\nu,2}( E; X)$ the set of all $X$-valued measurable
  functions $r=\{r(e), e\in E\}$ defined on the measure
  space $(E, \mathscr B(E); v)$
  such that
$\|r\|_{M^{\nu,2}( E; X)}\triangleq
\sqrt{{\int_E\|r(e)\|_X^2v(de)}}<~\infty,$ by
${M}_\mathscr{F}^{\nu,2}{([0,T]\times  E; X)}$ the  set of all
$\mathscr{P}\times {\mathscr B}(E)$-measurable $X$-valued processes
$r=\{r(t,\omega,e),\
(t,\omega,e)\in[0,T]\times\Omega\times E\}$ such that
$\|r\|_{{M}_\mathscr{F}^{\nu,2}{([0,T]\times  E; X)}}\triangleq
\sqrt{{\mathbb E\bigg[\int_0^T\int_E\displaystyle\|r(t,e)\|_X^2
\nu(de)dt\bigg]}}<~\infty,$
 by
$M_{\mathscr{F}}^2(0,T;X)$ the set of all ${\mathscr{F}}_t$-adapted
$X$-valued  processes $f=\{f(t,\omega),\
(t,\omega)\in[0,T]\times\Omega\}$ such that
$\|f\|_{M_{\mathscr{F}}^2(0,T;X)}
\triangleq\sqrt{\mathbb E\bigg[\displaystyle\int_0^T\|f(t)\|_X^2dt\bigg]}<\infty,$
by  $S_{\mathscr{F}}^2(0,T;X)$  the set of all
${\mathscr{F}}_t$-adapted  $X$-valued c\`{a}dl\`{a}g processes
$f=\{f(t,\omega),\ (t,\omega)\in[0,T]\times\Omega\}$ such that
$\|f\|_{S_{\mathscr{F}}^2(0,T;X)}\triangleq\sqrt{
\mathbb E\bigg[\displaystyle\sup_{0
\leq t \leq T}\|f(t)\|_X^2}\bigg]<+\infty,$  by
$L^2(\Omega,{\mathscr{F}},P;X)$ the set of all $X$-valued random
variables $\xi$ on $(\Omega,{\mathscr{F}},P)$ such that
$\|\xi\|_{L^2(\Omega,{\mathscr{F}},P;X)}\triangleq
\sqrt{\mathbb E[\|\xi\|_X^2]}<\infty.$
Throughout this paper, we let  $C$ and $K$  be two generic positive constants, which may be different from line to line.

Let $V$ and $H$ be two separable (real) Hilbert spaces such that $V$
is densely embedded in $H$. We identify $H$ with its dual space by
the Riesz mapping. Then  we can take $H$ as a pivot space and  get a
Gelfand triple $V \subset H= H^*\subset V^{*},$ where  $H^*$ and
$V^{*}$ denote the dual spaces of $H$ and $V$, respectively. Denote
by $\|\cdot\|_{V},\|\cdot\|_{H}$ and $\|\cdot\|_{V^*}$ the norms of
$V,H$ and $V^*$, respectively, by $(\cdot,\cdot)_H$ the inner
product in $H$, by $\la\cdot,\cdot\ra$ the duality product between
$V$ and $V^{*}$.
 Moreover we write $\mathscr{L}(V,V^*)$ the space of bounded
linear transformations of V into $V^*$.

Now we recall an It\^o's formula in Hilbert space
which will be  frequently  used  in this paper.

\begin{lem}\label{lem:c1}
Let $\varphi\in L^{2}(\Omega,\mathscr{F}_{0},P; H)$. Let $Y, Z$ and $\Gamma$  be three
  progressively measurable stochastic processes defined on $[0,T]\times \Omega$ with values in $V,H$ and $V^{*}$ such that $ Y\in { M}_{\mathscr F}^2 (0, T; V),  Z\in {M}_{\mathscr F}^2 (0, T; H)$ and $\Gamma \in { M}_{\mathscr F}^2 (0, T; V^*) $, respectively. Let $R$ be a ${\mathscr P} \otimes {\mathscr B} ({E})$ -measurable stochastic process defined on $[0,T]\times
\Omega\times E$ with values in $H$ such that
 $R\in {M}_\mathscr{F}^{\nu,2}{([0,T]\times  E; H)}.$
 Suppose that
for every
  $\eta \in V$ and almost every $(\omega,t)\in\Omega\times[0,T]$, it holds
  that
  \begin{equation*}
    ( \eta,Y)_H =( \eta, \varphi)_H+
    \int_{0}^{t} \la \eta,\Gamma(s) \ra ds
    + \int_0^t( \eta, Z )_HdW(s)+\int_0^t
    \int_{E}( \eta, R(s,e) )_H\tilde \mu(de,ds) .
  \end{equation*}
  Then  $Y$ is a $H$-valued  strongly cadlag
  $\mathscr F_t$-adapted process such that
\begin{eqnarray} \label{eq:2.111}
  \mathbb E \bigg [\sup_{0\leq t\leq T} ||Y||_H^2\bigg ] < \infty
\end{eqnarray}
and the following It\^{o} formula holds

 \begin{eqnarray}\label{eq:2.1}
   \begin{split}
     ||Y(t)||_H^2=&||\varphi||^2+
     2\int_0^t\langle \Gamma(s), Y(s) \rangle
     ds +2\int_0^t ( Z(s), Y(s))_H
     dW(s)+ \int_0^t||Z(s)||_H^2ds
     \\&+\int_0^t\int_E\bigg[ ||R(s,e)||_H^2+
     2 (Y(s), R(s,e))_H   \bigg]
     \tilde \mu(de,ds)+\int_0^t\int_E ||R(s,e)||_H^2\nu(de)ds.
   \end{split}
 \end{eqnarray}
\end{lem}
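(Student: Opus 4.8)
The plan is to reduce the infinite-dimensional identity to a family of one-dimensional It\^o formulas by a Galerkin-type projection and then pass to the limit. First I would fix an orthonormal basis $\{h_i\}_{i\ge 1}\subset V$ of $H$, chosen to be orthogonal also in $V$, and set $P_n\eta=\sum_{i\le n}(h_i,\eta)_H h_i$; such a basis, which simultaneously diagonalizes the $H$- and $V$-inner products, is available for instance when $V\hookrightarrow H$ is compact (as the eigenbasis of an associated symmetric coercive operator), and then $P_n$ is a contraction on both $H$ and $V$ with $P_n\to I$ strongly in each. (In the general case one replaces the spatial projection by a regularization in time, the delicate drift term below being treated analogously.) Testing the hypothesis with $\eta=h_i$ shows that the scalar process $Y^i(t):=(h_i,Y(t))_H$ coincides $ds$-a.e.\ with the real semimartingale $(h_i,\varphi)_H+\int_0^t\langle h_i,\Gamma(s)\rangle\,ds+\int_0^t(h_i,Z(s))_H\,dW(s)+\int_0^t\int_E(h_i,R(s,e))_H\,\tilde\mu(de,ds)$, which admits a c\`adl\`ag modification; I work with that modification throughout.

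Second, I would apply the classical scalar It\^o formula to $(Y^i)^2$: the continuous martingale part produces $\int_0^t(h_i,Z)_H^2\,ds$, the jump part produces $\int_0^t\int_E[2Y^i(s-)(h_i,R)_H+(h_i,R)_H^2]\,\tilde\mu(de,ds)$ after writing $\mu=\tilde\mu+\nu\,ds$, and the compensator produces $\int_0^t\int_E(h_i,R)_H^2\,\nu(de)ds$. Summing over $i\le n$ and using $\sum_{i\le n}(h_i,Y)_H\langle h_i,\Gamma\rangle=\langle\Gamma,P_nY\rangle$ together with $\sum_{i\le n}(h_i,\cdot)_H^2=\|P_n\cdot\|_H^2$ yields the projected identity
\[
\|P_nY(t)\|_H^2=\|P_n\varphi\|_H^2+2\int_0^t\langle\Gamma(s),P_nY(s)\rangle\,ds+2\int_0^t(Z(s),P_nY(s))_H\,dW(s)+\int_0^t\|P_nZ(s)\|_H^2\,ds+J_n(t),
\]
where $J_n(t)$ gathers the two jump contributions with $Y,R$ replaced by $P_nY,P_nR$ (the predictable version $Y(s-)$ agreeing with $Y(s)$ off the jump times).

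Third, I would send $n\to\infty$. The terms built from $P_n\varphi$, $P_nZ$, $P_nR$ converge by the strong convergence $P_n\to I$ on $H$, dominated convergence, and the It\^o isometries for the Brownian and compensated-Poisson integrals, using $Y\in M_{\mathscr{F}}^2(0,T;H)$ (since $V\hookrightarrow H$) and $R\in M_{\mathscr{F}}^{\nu,2}([0,T]\times E;H)$. \textbf{The main obstacle is the drift term} $\int_0^t\langle\Gamma(s),P_nY(s)\rangle\,ds$: since $\Gamma$ only lies in $V^*$, its pairing with $P_nY$ cannot be estimated through the $H$-norm, so one genuinely needs $P_nY\to Y$ in $V$ with an $n$-uniform domination. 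This is precisely what the joint orthogonality of the basis secures, giving $\|P_nY(s)\|_V\le\|Y(s)\|_V$ and $P_nY(s)\to Y(s)$ in $V$ for a.e.\ $s$, whence $|\langle\Gamma,P_nY\rangle|\le\|\Gamma\|_{V^*}\|Y\|_V\in L^1(ds\,d\mathbb{P})$ and dominated convergence delivers $2\int_0^t\langle\Gamma(s),Y(s)\rangle\,ds$. Passing to the limit on both sides proves \eqref{eq:2.1}.

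Finally, I would read off the regularity. The scalar c\`adl\`ag modifications assemble into a $V^*$-valued c\`adl\`ag version of $Y$ equal to the integral process and agreeing with $Y$ in $H$ for a.e.\ $t$; combined with the continuity of $t\mapsto\|Y(t)\|_H$ off the jump times furnished by \eqref{eq:2.1}, this upgrades $Y$ to a strongly c\`adl\`ag $H$-valued process. The bound \eqref{eq:2.111} then follows by taking the supremum and expectation in \eqref{eq:2.1}, controlling the martingale integrals through the Burkholder--Davis--Gundy inequality, estimating the drift and compensator terms by the assumed square-integrability of $\Gamma,Z,R$, and absorbing the resulting $\sup_t\|Y\|_H^2$ via Young's inequality.
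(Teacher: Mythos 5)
Your Galerkin strategy hinges entirely on the existence of an orthonormal basis of $H$, contained in $V$, that is simultaneously orthogonal in $V$ and whose linear span is dense in $V$: this is what gives you both the uniform domination $\|P_nY(s)\|_V\le\|Y(s)\|_V$ and the convergence $P_nY(s)\to Y(s)$ in $V$, which you correctly identify as the indispensable ingredients for passing to the limit in the drift term $\int_0^t\langle\Gamma(s),P_nY(s)\rangle\,ds$. But such a basis is an additional structural hypothesis, not a consequence of the lemma's assumptions. Its existence is equivalent to the bounded, self-adjoint, positive, injective operator $S:V\to V$ defined by $(Sv,w)_V=(v,w)_H$ having a complete orthogonal system of eigenvectors; this holds when $V\hookrightarrow H$ is compact, but fails in general, and it fails precisely in this paper's own application (Section 8), where $V=H^1(\mathbb{R}^d)$ and $H=L^2(\mathbb{R}^d)$: there $S$ is, on the Fourier side, multiplication by $(1+|\xi|^2)^{-1}$, which has purely continuous spectrum and no eigenvectors, so no jointly orthogonal $V$-dense basis exists. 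Your parenthetical fallback --- ``replace the spatial projection by a regularization in time, the delicate drift term being treated analogously'' --- is not analogous at all: the time-discretization argument is exactly the nontrivial content of Theorem 1 of Gy\"{o}ngy and Krylov [13] (which is what the paper's one-line proof invokes), and its treatment of the pairing with $\Gamma$ --- writing increments of $\|Y\|_H^2$ between partition points chosen so that $Y$ lies in $V$ there, and controlling cross terms uniformly over partitions --- bears no resemblance to your dominated-convergence step. So, as written, your proof establishes the lemma only for a restricted class of Gelfand triples that excludes the case the paper actually needs.

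Two further points. First, your final absorption step is circular as stated: bounding the martingale terms by Burkholder--Davis--Gundy and then absorbing $\tfrac12\,\mathbb{E}\bigl[\sup_{0\le t\le T}\|Y(t)\|_H^2\bigr]$ into the left-hand side via Young's inequality is legitimate only if that quantity is already known to be finite, which is part of what is being proved; the standard remedy is to localize with stopping times $\tau_N=\inf\{t:\|Y(t)\|_H>N\}$ (meaningful once the c\`adl\`ag $H$-valued modification is in hand, since then $\sup_t\|Y(t)\|_H<\infty$ a.s.), obtain the estimate uniformly in $N$, and let $N\to\infty$ by monotone convergence. Second, for perspective: the paper offers no self-contained argument at all, only the citation to [13], so attempting a real proof is commendable --- but to stand in for that citation it must cover the general separable Gelfand triple, and in its present form it does not.
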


\begin{proof}
  The proof follows that of Theorem 1 in Gy\"{o}ngy and Krylov [13].
\end{proof}

\section{Stochastic Evolution Equation with Jumps}

In this section, we study  the existence and uniqueness of the solution of the following  stochastic evolution equation (SEE, for short)
with jumps:

\begin{eqnarray} \label{eq:3.1}
  \left\{
  \begin{aligned}
   d X (t) = & \ [ A (t) X (t) + b ( t, X (t)) ] d t
+ [B(t)X(t)+g( t, X (t)) ]d W(t)
 \\&\quad +\int_E \sigma (t,e, X(t))\tilde \mu(de,dt),  \\
X (0) = & \  x \in H , \quad t \in [ 0, T ],
  \end{aligned}
  \right.
\end{eqnarray}
where $A,B,b,g$ and  $\sigma $ are given random mappings
which satisfying the following
standing  assumptions.

\begin{ass} \label{ass:3.1}
  (i) The operator processes $A:[0,T]\times \Omega \longrightarrow {\mathscr L} (V, V^*)$ and $B
  : [0,T]\times \Omega \longrightarrow {\mathscr L} (V, H)$
  are weakly predictable; i.e.,
  $ \langle A(\cdot)x, y \rangle$ and $(B(\cdot)x, y)_H$
  are both predictable process for every $x, y\in V, $
  and satisfy the coercive condition, i.e.,  there exists
  some constants  $ C, \alpha>0$ and $\lambda$ such that
  a.s.$(t,\omega)\in [0,T]\times \Omega$
  for all $x\in V,$
    \begin{eqnarray}
    \begin{split}
     - \langle A(t)x, x \rangle +\lambda ||x||_H \geq \alpha
      ||x||_V+||Bx||_H.
    \end{split}
  \end{eqnarray}
  and  \begin{eqnarray}
\sup_{( t, \omega ) \in [0, T] \times \Omega} \| A ( t,\omega ) \|_{{\mathscr L} ( V, V^* )}
 +\sup_{( t, \omega ) \in [0, T] \times \Omega} \| B ( t,\omega ) \|_{{\mathscr L} ( V, H )} \leq C \ .
\end{eqnarray}
\end{ass}

\begin{ass} \label{ass:3.2}
   The mappings $b:[0,T]\times  \Omega\times H  \longrightarrow H$ and $g:[0,T]\times\Omega\times H  \longrightarrow H$  are both $\mathscr P\times
   \mathscr B(H)/\mathscr B(H) $-measurable; the mapping $\sigma:[0,T]\times  \Omega \times
   E\times H  \longrightarrow H$ is $\mathscr P\times\mathscr B(E)  \times
   \mathscr B(H)/\mathscr B(H) $-measurable.
   And there exists a constant $C$  such that
   for all $x, \bar x\in V$ and a.s.$(t,\omega)\in [0,T]\times \Omega$
   \begin{eqnarray}
     \begin{split}
       ||b(t,x)-b(t,x)||_H+ ||g(t,x)-g(t,x)||_H
       +||\sigma(t,x,\cdot)-\sigma(t,x.\cdot)||
       _{M^{\nu,2}( Z; H)} \leq  C||x-\bar x||_H.
     \end{split}
   \end{eqnarray}

\end{ass}

\begin{defn}
\label{defn:c1}
A $V$-valued, $\{{\mathscr F}_t\}_{0\leq t\leq T}$-adapted process $X(\cdot)$ is said to be a solution to the
SEE \eqref{eq:3.1}, if $X (\cdot) \in { M}_{
\mathscr F}^2 ( 0, T; V )$, such that for every $\phi \in V$
and a.e. $( t, \omega ) \in [0, T ] \times \Omega$, it holds that
\begin{eqnarray}
\left\{
\begin{aligned}
( X (t), \phi )_H =& \ ( x, \phi )_H + \int_0^t \left < A (s) X (s), \phi \right > d s
+\int_0^t ( b ( s, X (s), \phi )_H d s \\
& + \int_0^t ( B(s)X(s)+ g ( s, X (s) ), \phi )_H d W (s)
\\& + \int_0^t \int_{E} (\sigma ( s,e, X (s) ), \phi )_H d \tilde \mu (de,ds), \quad t \in [ 0, T ] , \\
X(0) =& \ x\in H  ,
\end{aligned}
\right.
\end{eqnarray}
or alternatively, $X (\cdot)$ satisfies the following It\^o's equation in $V^*$:
\begin{eqnarray}
\left\{
\begin{aligned}
X (t)=& \  x+ \int_0^t  A (s) X (s)d s
+\int_0^t  b ( s, X (s)d s + \int_0^t 
[B(s)X(s)+ g ( s, X (s) )] d W (s)
\\& + \int_0^t \int_{E} \sigma ( s,e, X (s) ) d \tilde \mu (de,ds), \quad t \in [ 0, T ] , \\
X(t) =& \ x\in H .
\end{aligned}
\right.
\end{eqnarray}
\end{defn}

Now we state our main result.

\begin{thm} \label{thm:3.1}
  Let Assumptions \ref{ass:3.1}-\ref{ass:3.2} be
  satisfied by given coefficients $(A,B,b,g,\sigma)$.  Then  SEE \eqref{eq:3.1} has a unique
  solution $X(\cdot)\in M_{\mathscr{F}}^2(0,T;V) \bigcap  S_{\mathscr{F}}^2(0,T;H).$
\end{thm}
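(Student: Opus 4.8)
The plan is to prove uniqueness together with an a priori (continuous-dependence) estimate first, and then to deduce existence by the parameter extension method, using that estimate to set up a contraction whose contraction constant does not degenerate along the continuation.

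First I would establish the basic energy estimate. Let $X^1,X^2\in M_{\mathscr F}^2(0,T;V)$ solve \eqref{eq:3.1} for two (possibly different) sets of inhomogeneous data, and put $\hat X=X^1-X^2$. Applying the It\^o formula of Lemma \ref{lem:c1} to $\|\hat X(t)\|_H^2$ with $\Gamma=A\hat X+\hat b$, $Z=B\hat X+\hat g$ and $R=\hat\sigma$, and taking expectations to remove the two martingale terms, gives
\begin{equation*}
\mathbb{E}\|\hat X(t)\|_H^2=\mathbb{E}\int_0^t\Big[2\la A\hat X+\hat b,\hat X\ra+\|B\hat X+\hat g\|_H^2+\int_E\|\hat\sigma\|_H^2\,\nu(de)\Big]ds .
\end{equation*}
Here the coercivity of Assumption \ref{ass:3.1} bounds $2\la A\hat X,\hat X\ra+\|B\hat X\|_H^2$ from above by $-\alpha\|\hat X\|_V^2+\lambda\|\hat X\|_H^2$, while the Lipschitz property of Assumption \ref{ass:3.2}, combined with Young's inequality, controls the cross terms and the contributions of $\hat b,\hat g,\hat\sigma$ by $C\|\hat X\|_H^2$ plus an arbitrarily small multiple of $\|\hat X\|_V^2$ that is absorbed into the coercive term, plus the squared norms of the data differences. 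Gronwall's inequality then yields a bound on $\sup_{t}\mathbb{E}\|\hat X(t)\|_H^2+\mathbb{E}\int_0^T\|\hat X(s)\|_V^2\,ds$, and a Burkholder--Davis--Gundy estimate applied to the martingale terms upgrades it to a bound on $\mathbb{E}\sup_{0\le t\le T}\|\hat X(t)\|_H^2$. Taking the data equal gives uniqueness in $M_{\mathscr F}^2(0,T;V)\cap S_{\mathscr F}^2(0,T;H)$; keeping them free is the continuous-dependence theorem.

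For existence I would run the parameter extension. For $\lambda\in[0,1]$ and arbitrary $\phi,\psi\in M_{\mathscr F}^2(0,T;H)$ and $\gamma\in M_{\mathscr F}^{\nu,2}([0,T]\times E;H)$, consider
\begin{equation*}
dX=[AX+\lambda b(\cdot,X)+\phi]\,dt+[BX+\lambda g(\cdot,X)+\psi]\,dW+\int_E[\lambda\sigma(\cdot,e,X)+\gamma]\,\tilde\mu(de,dt),\quad X(0)=x,
\end{equation*}
and let $\mathcal N(\lambda)$ denote the assertion that this equation has a unique solution in $M_{\mathscr F}^2(0,T;V)\cap S_{\mathscr F}^2(0,T;H)$ for every such $(\phi,\psi,\gamma)$. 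The target is $\mathcal N(1)$ with $(\phi,\psi,\gamma)=0$. The base case $\mathcal N(0)$ is the purely linear equation; since $A$ is coercive this is solved by the standard variational (Galerkin) scheme, whose convergence is guaranteed by the same a priori estimate (alternatively one may first deform $A$ to the Riesz isomorphism so that the $\lambda=0$ equation carries only additive noise). For the inductive step, assume $\mathcal N(\lambda_0)$ and, given data for $\lambda_0+\delta$, define the map $\Phi:Y\mapsto X$, where $X$ is the unique $\mathcal N(\lambda_0)$-solution driven by the inhomogeneous terms $\phi+\delta b(\cdot,Y)$, $\psi+\delta g(\cdot,Y)$, $\gamma+\delta\sigma(\cdot,\cdot,Y)$; a fixed point of $\Phi$ is precisely a solution for $\lambda_0+\delta$. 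By the Lipschitz property $\Phi$ maps $M_{\mathscr F}^2(0,T;H)$ into itself, and the continuous-dependence estimate gives $\|\Phi(Y^1)-\Phi(Y^2)\|_{M_{\mathscr F}^2(0,T;H)}\le C\delta\,\|Y^1-Y^2\|_{M_{\mathscr F}^2(0,T;H)}$. Choosing $\delta=\delta_0:=1/(2C)$, which is independent of $\lambda_0$, makes $\Phi$ a contraction, and its fixed point automatically lies in $M_{\mathscr F}^2(0,T;V)\cap S_{\mathscr F}^2(0,T;H)$; hence $\mathcal N(\lambda_0+\delta_0)$ holds. Iterating finitely many times from $\mathcal N(0)$ reaches $\mathcal N(1)$, which is the theorem.

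The hard part will be the a priori estimate rather than the continuation bookkeeping: one must check that the coercivity constant $\alpha$ is large enough to absorb simultaneously the diffusion term $\|B\hat X\|_H^2$ and all the Young-inequality remainders produced by the $H$-valued nonlinearities, and one must handle the compensated jump integral carefully, both in the expectation identity (where $\int_E\|\hat\sigma\|_H^2\,\nu(de)$ enters through the quadratic variation) and in the Burkholder--Davis--Gundy step that yields the $S_{\mathscr F}^2(0,T;H)$ bound. The only other delicate point is the base case $\mathcal N(0)$: the linear existence must be secured independently, since the continuation argument merely propagates solvability and cannot generate it from nothing.
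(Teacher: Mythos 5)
Your proposal is correct and follows essentially the same route as the paper: the a priori/continuous-dependence estimate via the It\^o formula of Lemma \ref{lem:c1} with coercivity, Gronwall and Burkholder--Davis--Gundy (the paper's Theorem \ref{thm:3.2}), uniqueness as its immediate corollary, the linear base case by Galerkin approximation (the paper's Lemma \ref{lem:3.3}), and existence by parameter extension with a contraction whose step size is uniform in the parameter. The only cosmetic difference is that you contract in $M_{\mathscr F}^2(0,T;H)$ while the paper contracts in $M_{\mathscr F}^2(0,T;V)$; both work because the Lipschitz hypotheses and estimate \eqref{eq:3.5} are phrased in the $H$-norm of the data differences.
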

To prove this  theorem, we need the following
result on  the a prior estimate for the solution
 to   SEE \eqref{eq:3.1}.

 \begin{thm} \label{thm:3.2}
  Let  $ X(\cdot)$  be the solution to
  the  SEE   \eqref{eq:3.1}
   with the coefficients $(A,B, b,g,\sigma)$
   satisfying Assumptions \ref{ass:3.1}-\ref{ass:3.2}. Then
  the following estimate holds:
\begin{eqnarray}\label{eq:3.4}
\begin{split}
&{\mathbb E} \bigg [ \sup_{0 \leq t \leq T} \| X (t) \|_H^2 \bigg
]
+ {\mathbb E} \bigg [ \int_0^T \| X (t) \|_V^2 d t \bigg ] \\
& \leq K \bigg \{ ||x||_H + {\mathbb E}
\bigg [ \int_0^T \| b ( t, 0) \|_H^2 d t \bigg ] + {\mathbb E}
\bigg [ \int_0^T \| g ( t, 0) \|_H^2 d t \bigg ]
+ {\mathbb E} \bigg [ \int_{0}^T\int_E \| \sigma (t,e,0) \|^2_H  \nu(de)d t \bigg ] \bigg \}.
\end{split}
\end{eqnarray}
Furthermore, suppose that   $ \bar X(\cdot)$  is the solution to
  the  SEE   \eqref{eq:3.1}
   with the initial value $\bar X(0)=\bar x$ and  coefficients $(A,B,  \bar b, \bar g,\bar\sigma)$
   satisfying Assumptions \ref{ass:3.1}-\ref{ass:3.2},
   then we have

   \begin{eqnarray}\label{eq:3.5}
&& {\mathbb E} \bigg [ \sup_{0 \leq t \leq T} \| X (t) - {\bar X} (t) \|_H^2 \bigg ]
+ {\mathbb E} \bigg [ \int_0^T \| X (t) - {\bar X} (t) \|_V^2 d t \bigg ] \nonumber \\
&& \leq K \bigg \{  \|x-\bar x\|^2_H
+ {\mathbb E} \bigg [ \int_0^T \| b ( t, {\bar X} (t) )
- {\bar b} ( t, {\bar X} (t) ) \|_H^2 d t \bigg ] \\
&&+ {\mathbb E} \bigg [ \int_0^T \| g ( t, {\bar X} (t) ) - {\bar g}( t, {\bar X} (t) ) \|_H^2 d t \bigg ]
+ {\mathbb E} \bigg [ \int_0^T \int_{E}\| \sigma ( t, e, {\bar X} (t)) - {\bar \sigma}( t, e, {\bar X} (t) ) \|_H^2 \nu(de)d t \bigg ]   \bigg \} .\nonumber
\end{eqnarray}
 \end{thm}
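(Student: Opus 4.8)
The plan is to apply the It\^o formula of Lemma~\ref{lem:c1} to the square norm $\|X(t)\|_H^2$ and then combine the coercivity condition with the Lipschitz bounds to close a Gronwall-type estimate. First I would identify the data of Lemma~\ref{lem:c1} for a solution of \eqref{eq:3.1}: take $\varphi=x$, the $V^*$-valued drift $\Gamma(s)=A(s)X(s)+b(s,X(s))$, the $H$-valued diffusion $Z(s)=B(s)X(s)+g(s,X(s))$, and the jump coefficient $R(s,e)=\sigma(s,e,X(s))$. Formula \eqref{eq:2.1} then gives
\begin{align*}
\|X(t)\|_H^2 &= \|x\|_H^2 + 2\int_0^t \big[\,\la A(s)X(s),X(s)\ra + (b(s,X(s)),X(s))_H\,\big]\,ds \\
&\quad + \int_0^t \|B(s)X(s)+g(s,X(s))\|_H^2\,ds + \int_0^t\!\int_E \|\sigma(s,e,X(s))\|_H^2\,\nu(de)\,ds \\
&\quad + 2\int_0^t \big(B(s)X(s)+g(s,X(s)),X(s)\big)_H\,dW(s) \\
&\quad + \int_0^t\!\int_E \big[\,\|\sigma(s,e,X(s))\|_H^2 + 2(X(s),\sigma(s,e,X(s)))_H\,\big]\,\tilde\mu(de,ds).
\end{align*}

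The heart of the argument is to regroup the $ds$-integrals so that the coercivity condition of Assumption~\ref{ass:3.1} applies to the combination $2\la A(s)X(s),X(s)\ra + \|B(s)X(s)\|_H^2$, producing the dissipative term $-\alpha\|X(s)\|_V^2$ together with a term $\lambda\|X(s)\|_H^2$. The remaining contributions involving $b$, $g$ and $\sigma$ I would dominate using Assumption~\ref{ass:3.2} in the sublinear form $\|b(s,X(s))\|_H \le \|b(s,0)\|_H + C\|X(s)\|_H$ (and analogously for $g$ and $\sigma$), followed by Young's inequality $2ab \le \eps a^2 + \eps^{-1} b^2$. Choosing $\eps$ small absorbs any $\|X(s)\|_V^2$ contributions into the coercive term, the $\|X(s)\|_H^2$ terms are retained for Gronwall, and the free data $\|b(s,0)\|_H^2$, $\|g(s,0)\|_H^2$, $\int_E\|\sigma(s,e,0)\|_H^2\nu(de)$ accumulate into the right-hand side of \eqref{eq:3.4}. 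Taking expectations kills the $dW$ and $\tilde\mu$ martingale terms; Gronwall's lemma applied to $t\mapsto\mathbb{E}\|X(t)\|_H^2$ then bounds $\sup_{0\le t\le T}\mathbb{E}\|X(t)\|_H^2$ and, by integrating the dissipative term, $\mathbb{E}\int_0^T\|X(t)\|_V^2\,dt$.

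The delicate point, and the main technical obstacle, is upgrading $\sup_t\mathbb{E}$ to $\mathbb{E}\sup_t$. For this I would return to the It\^o identity and apply the Burkholder--Davis--Gundy inequality to the two martingale parts. Their quadratic variations are controlled by integrals of the form $\int_0^T \|X(s)\|_H^2\,\|Z(s)\|_H^2\,ds$ and $\int_0^T\!\int_E \|X(s)\|_H^2\|\sigma\|_H^2\,\nu(de)\,ds$ (plus the purely quadratic $\|\sigma\|_H^2$ jump term), so BDG yields bounds of the type $C\,\mathbb{E}\big[\sup_{0\le t\le T}\|X(t)\|_H \big(\int_0^T \|Z(s)\|_H^2\,ds\big)^{1/2}\big]$. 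A further application of Young's inequality splits this into $\tfrac12\,\mathbb{E}\sup_{0\le t\le T}\|X(t)\|_H^2$ plus a remainder governed by $\mathbb{E}\int_0^T\|Z(s)\|_H^2\,ds$, which in turn is bounded through $\|Z\|_H^2\lesssim \|X\|_V^2+\|g(\cdot,0)\|_H^2$ by the already-established $V$-estimate. The leading half-term is absorbed into the left-hand side, giving \eqref{eq:3.4}. Throughout this step I would first work with a localizing sequence of stopping times to legitimize the martingale cancellation and the BDG bounds, deriving the estimate for the stopped processes and then passing to the limit by Fatou's lemma and monotone convergence once the uniform bound is in hand.

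For the continuous-dependence estimate \eqref{eq:3.5} I would run the identical scheme on the difference $X(\cdot)-\bar X(\cdot)$, which solves an equation of the same type with the \emph{same} operators $A,B$ but with coefficient differences $b(\cdot,X)-\bar b(\cdot,\bar X)$, $g(\cdot,X)-\bar g(\cdot,\bar X)$, $\sigma(\cdot,\cdot,X)-\bar\sigma(\cdot,\cdot,\bar X)$ and initial datum $x-\bar x$. The only new ingredient is the splitting $b(t,X)-\bar b(t,\bar X)=\big[b(t,X)-b(t,\bar X)\big]+\big[b(t,\bar X)-\bar b(t,\bar X)\big]$: the first bracket is Lipschitz of order $\|X-\bar X\|_H$ and feeds directly into the Gronwall/BDG/absorption machinery exactly as above, while the second bracket is precisely the inhomogeneous term $b(t,\bar X)-\bar b(t,\bar X)$ appearing on the right-hand side of \eqref{eq:3.5}, and likewise for $g$ and $\sigma$. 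With this decomposition the derivation of \eqref{eq:3.5} is verbatim that of \eqref{eq:3.4}, so no genuinely new difficulty arises beyond the absorption step already handled.
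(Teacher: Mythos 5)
Your proposal is correct and follows essentially the same route as the paper: It\^o's formula of Lemma~\ref{lem:c1} applied to the squared $H$-norm, regrouping so the coercivity of Assumption~\ref{ass:3.1} absorbs the $V$-norm, the splitting $b(t,X)-\bar b(t,\bar X)=[b(t,X)-b(t,\bar X)]+[b(t,\bar X)-\bar b(t,\bar X)]$ (exactly the paper's $\Delta b$ and $\hat b$ terms), Gr\"onwall, and finally Burkholder--Davis--Gundy with a Young-inequality absorption to upgrade $\sup_t \mathbb{E}$ to $\mathbb{E}\sup_t$. The only difference is organizational: the paper proves only \eqref{eq:3.5} and obtains \eqref{eq:3.4} as the special case $\bar x=0$, $(\bar b,\bar g,\bar\sigma)=(0,0,0)$ (so that $\bar X\equiv 0$), whereas you prove \eqref{eq:3.4} directly and then repeat the argument for \eqref{eq:3.5}---slightly redundant but harmless, and your added stopping-time localization is a technical refinement the paper leaves implicit.
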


 \begin{proof}
   The estimate  \eqref{eq:3.4}
   can be directly obtained by
   the estimate \eqref {eq:3.5}
   by taking  the initial value $\bar X(0)=0$ and  coefficients $(A,B,  \bar b, \bar g,\bar\sigma)=(A, B,0,0,0)$ which imply that
   $\bar X(\cdot)\equiv 0.$
   Therefore, it suffices to prove the estimate
   \eqref {eq:3.5}.
   For the sake of simplicity,
   in the following discussion, we will
   use the following shorthand notation:

   \begin{eqnarray*}
  && {\hat X} (t) \triangleq X (t) - {\bar X} (t) , \quad \hat x \triangleq x - \bar x,
  \\&& \Lambda\triangleq  \|x-\bar x\|^2_H
+ {\mathbb E} \bigg [ \int_0^T \| b ( t, {\bar X} (t) )
- {\bar b} ( t, {\bar X} (t) ) \|_H^2 d t \bigg ] + {\mathbb E} \bigg [ \int_0^T \| g ( t, {\bar X} (t) ) - {\bar g}( t, {\bar X} (t) ) \|_H^2 d t \bigg ]
\\&&\quad \quad \quad+ {\mathbb E} \bigg [ \int_0^T \int_{E}\| \sigma ( t, {\bar X} (t), e) - {\bar \sigma}( t, {\bar X} (t),e ) \|_H^2 \nu(de)d t \bigg ]
\\
&& {\tilde \phi} (t) \triangleq \phi ( t, { X} (t))
- {\bar \phi} ( t, {\bar X} (t) ),
{\hat \phi} (t) \triangleq \phi ( t, {\bar X} (t))
- {\bar \phi} ( t, {\bar X} (t) ) ,
 {\Delta \phi} (t) \triangleq \phi ( t, { X} (t))
- { \phi} ( t, {\bar X} (t) ),
\end{eqnarray*}
where $\phi=b,g,\sigma.$

Applying It\^{o} formula in Lemma \ref{lem:c1} to $||\hat X(t)||_H^2$
and using Assumption \ref{ass:3.1}-\ref{ass:3.2} and the elementary inequalities $|a+b|^2\leq 2a^2+2 b^2$
and $2
a b \leq \frac{1}{\epsilon} a^2 + \epsilon b^2$, $\forall a, b > 0$,
$\epsilon > 0$ and
, we get  that

\begin{eqnarray}\label{eq:3.6}
     ||\hat X(t)||_H^2=&&||\hat x||^2_H+
     2\int_0^t\langle  A(s)\hat X(s), \hat X(s) \rangle
     ds +2\int_0^t  ( \hat X(s), \tilde b(s))_H ds
+\int_0^t||B(s)\hat X(s)+\tilde g(s)||_H^2ds\nonumber
  \\&&+\int_0^t\int_E ||\tilde \sigma (s,e)||_H^2\nu(de)ds
+2\int_0^t  ( \hat X(s), B(s)\hat X(s)+
     \tilde g(s))_H
     dW(s)\nonumber
     \\&&+\int_0^t\int_E\bigg[ ||\tilde \sigma(s,e)||_H^2+
     2 (\hat X(s), \tilde \sigma(s,e))   \bigg]
     \tilde \mu(de,ds)\nonumber
     \\=&&||\hat x||^2_H+
     2\int_0^t\bigg[\langle  A(s)\hat X(s), \hat X(s) \rangle+||B(s)\hat X(s)||_H^2\bigg]
     ds+\int_0^t ||\Delta g(s)+\hat g(s)||_H^2\nonumber
     \\&&
     +2\int_0^t  (  B(s)\hat X(s), \Delta g(s)+\hat g(s))_H
     ds+2\int_0^t  ( \hat X(s), \Delta b(s)+\hat b(s))_H
     ds\nonumber
\\&&+\int_0^t\int_E ||\Delta\sigma(s,e)+\hat \sigma (s,e)||_H^2 \nu(de)ds+
     2\int_0^t  ( \hat X(s), B(s)\hat X(s)
     +\tilde g(s))_H dW(s)\nonumber
     \\&&+\int_0^t\int_E\bigg[ ||\tilde \sigma(s,e)||_H^2+
     2 (\hat X(s), \tilde \sigma(s,e))   \bigg]
     \tilde \mu(de,ds)\nonumber
     \\ \leq &&
     K\Lambda+ (-2\alpha+\varepsilon){\mathbb E} \bigg [
\int_0^t   \| \hat x ( s) \|_V^2ds\bigg] +K{\mathbb E} \bigg [
\int_0^t \| \hat X ( s) \|_H^2dt\bigg]\nonumber
\\&&+  2\int_0^t  ( \hat X(s), B(s)\hat X(s)
     +\tilde g(s))_H dW(s)
\\&&+\int_0^t\int_E\bigg[ ||\tilde \sigma(s,e)||_H^2+
     2 (\hat X(s), \tilde \sigma(s,e))   \bigg]
     \tilde \mu(de,ds).
 \end{eqnarray}
Taking expectation on both sides of the above
and taking  $\varepsilon$ small enough such that
$-2\alpha+\varepsilon <0,$  we  conclude that

\begin{eqnarray}\label{eq:3.7}
\begin{split}
    \mathbb E[ ||\hat X(t)||_H^2]
    +\mathbb E\bigg[\int_0^t ||\hat X(s)||_V^2ds\bigg]
    \leq
     K\Lambda
+K{\mathbb E} \bigg [
\int_0^t \| \hat X ( s) \|_H^2dt\bigg].
\end{split}
 \end{eqnarray}
Then by virtue of  Gr\"onwall's inequality to  $\mathbb E[||X(t)||_H^2],$ we obtain
\begin{eqnarray}\label{eq:3.8}
\begin{split}
    \sup_{0\leq t\leq T}\mathbb E[ ||\hat X(t)||_H^2]
    +\mathbb E\bigg[\int_0^T ||\hat X(s)||_V^2ds\bigg]
    \leq &
     K \Lambda.
\end{split}
 \end{eqnarray}
Furthermore, applying Burkholder-Davis-Gundy inequality in \eqref{eq:3.6}
and using  the estimate \eqref{eq:3.7},
we get that
\begin{eqnarray}
  \begin{split}
   {\mathbb E} \bigg [ \sup_{0 \leq t \leq T} \|
   \hat X (t) \|_H^2 \bigg ]
   \leq& K \Lambda+ 2{\mathbb E} \bigg \{ \sup_{0 \leq t \leq T}\bigg |\int_0^t( \hat X(s), B(s)\hat X(s)+\tilde g(s))_H dW(s)\bigg|\bigg\}
   \\&+ 2{\mathbb E}\bigg \{ \sup_{0 \leq t \leq T}\bigg |\int_0^t\int_{E}\bigg[ ||\tilde \sigma(s,e)||_H^2+
     2 (\hat X(s), \tilde \sigma(s,e))\bigg]\tilde \mu(de,dt)\bigg|\bigg\}
    \\ \leq& K \Lambda+ K{\mathbb E} \bigg \{
    \int_0^T \bigg |( \hat X(s), B(s)\hat X(s)+\tilde g(s))_H\bigg|^2ds\bigg\}^{\frac{1}{2}}
   \\&+ K{\mathbb E} \bigg \{
   \int_0^T\int_{E} \bigg | ||\tilde \sigma(s,e)||_H^2+
     2 (\hat X(s), \tilde \sigma(s,e))\bigg|
     \nu(de)dt\bigg\}
     \\&\leq  \frac{1}{2}{\mathbb E} \bigg [ \sup_{0 \leq t \leq T} \|
   \hat X (t) \|_H^2 \bigg ] +K\Lambda,
  \end{split}
\end{eqnarray}
which implies that

\begin{eqnarray}\label{eq:3.10}
  {\mathbb E} \bigg [ \sup_{0 \leq t \leq T} \|
   \hat X (t) \|_H^2 \bigg ]\leq K\Lambda.
\end{eqnarray}
Combining \eqref{eq:3.8} and
\eqref{eq:3.10}, we get the desired result.
The proof is complete.
 \end{proof}
Now we give the existence and
uniqueness of the solution
of \eqref{eq:3.1}
for a simple case where the
coefficients $(b, g, \sigma)$ is
independent of the variable $x.$

\begin{lem} \label{lem:3.3}
  Given three stochastic processes  $b,g$ and $\sigma$  such that $b\in M_{\mathscr{F}}^2(0,T;H), g\in M_{\mathscr{F}}^2(0,T;H)$ and  $\sigma\in M_{\mathscr{F}}^{\nu,2}(0,T;H).$
 Suppose that the operators $ A$ and $B$  satisfy  Assumption \ref{ass:3.1}.
  There exists a unique solution to the
  following SEE:
\begin{eqnarray} \label{eq:3.12}
  \left\{
  \begin{aligned}
   d X (t) = & \ [ A (t) X (t) + b ( t) ] d t
+ [B(t)X(t)+g( t) ]d W(t)+\int_E \sigma (t,e)\tilde \mu(de,dt),  \\
X (0) = & \  x , \quad t \in [ 0, T ].
  \end{aligned}
  \right.
\end{eqnarray}
\end{lem}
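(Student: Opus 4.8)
The plan is to read off uniqueness from the a priori estimate already proved and to obtain existence by a Galerkin (finite-dimensional projection) scheme, exploiting the fact that in \eqref{eq:3.12} the unknown enters \emph{only} linearly, through the bounded operators $A$ and $B$, while $b,g,\sigma$ are prescribed. For uniqueness, if $X$ and $\bar X$ are two solutions of \eqref{eq:3.12} with the same data, I apply the estimate \eqref{eq:3.5} of Theorem \ref{thm:3.2} with $(\bar b,\bar g,\bar\sigma)=(b,g,\sigma)$ and $\bar x=x$ (note that state-independent coefficients trivially satisfy Assumption \ref{ass:3.2}); then $\Lambda=0$, so $\mathbb E[\sup_t\|X-\bar X\|_H^2]+\mathbb E\int_0^T\|X-\bar X\|_V^2\,dt=0$ and $X=\bar X$.

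For existence, since $V$ is separable and densely embedded in $H$, I fix a basis $\{h_i\}_{i\ge1}\subset V$ that is orthonormal in $H$ and whose linear span is dense in $V$, and set $V_n=\mathrm{span}\{h_1,\dots,h_n\}$. I then seek a Galerkin approximation $X_n(t)=\sum_{i=1}^n c_i^n(t)h_i$ determined by requiring, for each $j\le n$,
\begin{equation*}
d(X_n(t),h_j)_H=\big[\la A(t)X_n(t),h_j\ra+(b(t),h_j)_H\big]dt+(B(t)X_n(t)+g(t),h_j)_H\,dW(t)+\int_E(\sigma(t,e),h_j)_H\,\tilde\mu(de,dt),
\end{equation*}
with $(X_n(0),h_j)_H=(x,h_j)_H$. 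Because $A,B$ are uniformly bounded by Assumption \ref{ass:3.1} and $b,g\in M_{\mathscr F}^2(0,T;H)$, $\sigma\in M_{\mathscr F}^{\nu,2}(0,T;H)$, this is a finite-dimensional \emph{linear} SDE with jumps for $c^n=(c_1^n,\dots,c_n^n)$, whose coefficients are globally Lipschitz; standard theory yields a unique adapted solution $X_n$.

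Next I would establish bounds uniform in $n$. Applying the finite-dimensional It\^o formula to $\|X_n(t)\|_H^2$ and proceeding exactly as in the proof of Theorem \ref{thm:3.2} — using the coercivity inequality of Assumption \ref{ass:3.1} to absorb $\|X_n\|_V$ and $\|B X_n\|_H$, Young's inequality on the cross terms, and the Burkholder--Davis--Gundy inequality for the martingale parts — gives
\begin{equation*}
\mathbb E\Big[\sup_{0\le t\le T}\|X_n(t)\|_H^2\Big]+\mathbb E\int_0^T\|X_n(t)\|_V^2\,dt\le K\Big\{\|x\|_H^2+\mathbb E\!\int_0^T\!\|b\|_H^2dt+\mathbb E\!\int_0^T\!\|g\|_H^2dt+\mathbb E\!\int_0^T\!\int_E\|\sigma\|_H^2\nu(de)dt\Big\},
\end{equation*}
with $K$ independent of $n$. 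Hence $\{X_n\}$ is bounded in the Hilbert space $M_{\mathscr F}^2(0,T;V)$, $\{A(\cdot)X_n\}$ in $M_{\mathscr F}^2(0,T;V^*)$, and $\{B(\cdot)X_n\}$ in $M_{\mathscr F}^2(0,T;H)$, so along a subsequence $X_n\rightharpoonup X$ weakly in $M_{\mathscr F}^2(0,T;V)$. Here linearity is decisive: since $X\mapsto A(\cdot)X$ and $X\mapsto B(\cdot)X$ are bounded linear, hence weakly continuous, the weak limits are immediately $A(\cdot)X_n\rightharpoonup A(\cdot)X$ and $B(\cdot)X_n\rightharpoonup B(\cdot)X$, and no monotonicity (Minty) argument is needed to identify them.

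The main obstacle is to pass to the limit in the equation and show that $X$ itself solves \eqref{eq:3.12}. I would test the Galerkin identity against $\mathbf 1_{[0,t]}(s)\,\mathbf 1_F\,h_j$ for $F\in\mathscr F_s$ and fixed $j$, and use that the It\^o- and Poisson-integral maps $Z\mapsto\int_0^\cdot(Z,h_j)_H\,dW$ and $R\mapsto\int_0^\cdot\!\int_E(R,h_j)_H\,\tilde\mu(de,ds)$ are bounded linear, hence weakly continuous, operators on $M_{\mathscr F}^2(0,T;H)$ and $M_{\mathscr F}^{\nu,2}(0,T;H)$ respectively; thus every term converges weakly to the corresponding term built from the limit $X$. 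This identifies $X$ as satisfying the weak formulation of Definition \ref{defn:c1} against each $h_j$, and by density against every $\phi\in V$. Finally, since $X\in M_{\mathscr F}^2(0,T;V)$ satisfies the $V^*$-valued equation with $\Gamma=A(\cdot)X+b\in M_{\mathscr F}^2(0,T;V^*)$, $Z=B(\cdot)X+g\in M_{\mathscr F}^2(0,T;H)$ and $R=\sigma\in M_{\mathscr F}^{\nu,2}(0,T;H)$, Lemma \ref{lem:c1} furnishes an $H$-valued c\`{a}dl\`{a}g modification with $\mathbb E[\sup_{0\le t\le T}\|X(t)\|_H^2]<\infty$, so $X\in M_{\mathscr F}^2(0,T;V)\cap S_{\mathscr F}^2(0,T;H)$, completing the existence proof.
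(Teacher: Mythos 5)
Your proposal is correct and takes essentially the same route as the paper: the paper's own proof of Lemma \ref{lem:3.3} is a one-line appeal to Galerkin approximations (citing Theorem 3.2 of reference [11] ``with minor change''), which is precisely the scheme you carry out, and your uniqueness argument via the a priori estimate \eqref{eq:3.5} of Theorem \ref{thm:3.2} matches how the paper itself extracts uniqueness from that estimate. The only difference is that you supply the details (the finite-dimensional projection, the uniform bounds via coercivity and Burkholder--Davis--Gundy, and the weak-limit identification using linearity of $A$, $B$ and of the stochastic integral maps) that the paper delegates to the cited reference.
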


\begin{proof}
  The  proof can be obtained by Galerkin approximations  in the same way as
  the proof of  Theorem 3.2 in [11]
  with minor change.
\end{proof}

{ \bf\emph{Proof of Theorem \ref{thm:3.1}.}} The uniqueness of the solution to the SEE
\eqref{eq:3.1} can be got   by the a priori estimate
\eqref{eq:3.5} directly.
  For $\rho \in [0, 1] $ and  three   any  given stochastic processes
   $b_0 (\cdot) \in { M}_{\mathscr F}^2 ( 0, T; H ),$
$g_0 (\cdot) \in {M}_{\mathscr F}^2 ( 0, T; H ),$  and  $\sigma_0(\cdot)\in M_{\mathscr{F}}^{\nu,2}(0,T;H),$ we introduce  a family of parameterized  SEEs as follows:
\begin{eqnarray}\label{eq:3.12}
\begin{split}
 X(t) =& \ x +\int_0^t A (s) X (s) d s + \int_0^t \Big [\rho  b ( s, X(s) )]+ b_0(t)\Big]d s+\int_0^t \Big[ B(s)X(s)+\rho g ( s, X ( s ) )+g_0(t)\Big ] d W (s)
 \\&+\int_E \Big[\rho \sigma (t,e, X(t))
 +\sigma_0(t,e)\Big]\tilde \mu(de,dt).
\end{split}
\end{eqnarray}
It is easy to see that
when we take the parameter $\rho=1$ and
$b_0 (\cdot)\equiv 0,g_0 (\cdot)\equiv0,
\sigma_0 (\cdot, \cdot)\equiv0,$
 the  SEE \eqref{eq:3.12}
 is reduced to  the original SEE
\eqref{eq:3.1}. Obviously,  the  coefficients of the SEE \eqref{eq:3.12}
satisfy  Assumption \ref{ass:3.1} and
\ref{ass:3.2}
with $(A, B,  b ,  g, \sigma)$  replaced by $(A, B, \rho b + b_0, \rho g+ g_0, \rho \sigma+ \sigma_0)$.
 Suppose
 for any
$b_0 (\cdot) \in { M}_{\mathscr F}^2 ( 0, T; H ),$
$g_0 (\cdot) \in { M}_{\mathscr F}^2 ( 0, T; H ),$  $\sigma_0(\cdot)\in M_{\mathscr{F}}^{\nu,2}(0,T;H),$ and  some  parameter $\rho = \rho_0$,
 there exists a unique solution $X (\cdot)\in { M}_{\mathscr F}^2 ( 0, T; V)$ to the SEE  \eqref{eq:3.12}.  For any parameter $\rho$,
the SEE \eqref{eq:3.12}
can be rewritten as
\begin{eqnarray}\label{eq:3.14}
\begin{split}
 X(t) =& \ x + \int_0^t A (s) X (s) d s + \int_0^t \Big [\rho_0 b ( s,X(s) )+ b_0(t)+(\rho-\rho_0)  b ( s,  X(s) )\Big]d s
 \\&+\int_0^t \Big[ B(s)X(s)+\rho_0g ( s, X ( s ) )+g_0(t)
 +(\rho-\rho_0)g ( s, X ( s ) )\Big ] d W (s)
 \\&+\int_0^t\int_E \Big[\rho_0
  \sigma( s,e, X ( s ) )+\sigma_0(t,e)
 +(\rho-\rho_0)\sigma( s,e, X ( s ) )\Big ] d \tilde \mu(de,ds).
\end{split}
\end{eqnarray}
 Therefore, by the above assumption,
  for any $x (\cdot) \in {M}_{\mathscr F}^2 ( 0, T; V),$
the  SEE
\begin{eqnarray}\label{eq:3.19}
\begin{split}
 X(t) =& \ x + \int_0^t A (s) X (s) d s + \int_0^t \Big [\rho_0 b ( s,X(s) )+ b_0(t)+(\rho-\rho_0)  b ( s,  x(s) )\Big]d s
 \\&+\int_0^t \Big[ B(s)X(s)+\rho_0g ( s, X ( s ) )+g_0(t)
 +(\rho-\rho_0)g ( s, x ( s ) )\Big ] d W (s)
 \\&+\int_0^t\int_E \Big[\rho_0
  \sigma( s,e, X ( s ) )+\sigma_0(t,e)
 +(\rho-\rho_0)\sigma( s,e, x ( s ) )\Big ] d \tilde \mu(de,ds)
\end{split}
\end{eqnarray}
 admits a unique solution $X (\cdot) \in { M}_{\mathscr F}^2 ( 0, T; V)$.
Now define  a mapping from $ { M}_{\mathscr F}^2 ( 0, T; V)$ onto itself denoted by $$X (\cdot) = \Gamma (x (\cdot)).$$
Then for any $x_i (\cdot) \in { M}_{\mathscr F}^2 ( 0, T; V)$, $i = 1, 2$, from the Lipschitz
continuity of $b$, $g$, $\sigma$ and  a priori estimate \eqref{eq:3.4},
it follows that
\begin{eqnarray*}
|| \Gamma ( x_1 (\cdot) ) - \Gamma ( x_2 (\cdot) ) ||^2_{{ M}_{\mathscr F}^2 ( 0, T; V)}
&=& || X_1 (\cdot) - X_2(\cdot) ||^2_{{ M}_{\mathscr F}^2 ( 0, T; V)}\\
&\leq& K |\rho-\rho_0|^2 \cdot || x_1 (\cdot) - x_2(\cdot) ||^2_{{ M}_{\mathscr F}^2 ( 0, T; V)}.
\end{eqnarray*}
Here $K $ is a  positive constant independent of $\rho$. If $| \rho - \rho_0 |< \frac{1}{2 \sqrt {K}}$, the mapping $\Gamma$ is strictly contractive in
${ M}_{\cal F}^2 ( 0, T; V )$. Hence it implies that
the SEE \eqref{eq:3.12} with the coefficients $( A, B, \rho b + b_0, \rho g + g_0, \rho \sigma + \sigma_0, )$ admits a unique solution
$X (\cdot)\in {M}_{\cal F}^2 ( 0, T; V )$.
From Lemma \ref{lem:3.3},  the uniqueness and existence of a solution to the SEE \eqref{eq:3.12} is true for $\rho=0$. Then starting from $\rho = 0$,  one can reach $\rho = 1$ in finite steps and this finishes
 the proof of solvability of the SEE \eqref{eq:3.1}.
Moreover,  from Lemma \ref{lem:c1} and  the  estimate \eqref{eq:2.111},
we obtain $ X(\cdot) \in {\cal S}_{\mathscr F}^2 ( 0, T; H )$. This completes the proof.

\section{ Formulation of  Optimal Control Problem }
Let $U$  be a real-valued Hilbert space standing for the control space. Let  ${\mathscr U}$  be a nonempty  convex
closed subset of $U$.  An  admissible control process $u (\cdot) \triangleq \{ u (t), 0 \leq t \leq T \}$ is  defined
as follows.
\begin{defn}
A stochastic  process $u (\cdot)$
 defined on $[0, T]\times \Omega$ is called an admissible control process  if it is a predictable  process such that
$u (\cdot) \in { M}^2 ( 0, T; U ).$
The set of all admissible controls is
denoted by  ${\cal A}$.
\end{defn}
We make the following
basic assumptions.
\begin{ass}\label{ass:2.5}
\begin{enumerate}
\item[]
\item[(i)]

$A:[0,T]\times \Omega \longrightarrow {\mathscr L} (V, V^*)$
and
$B: [0,T]\times \Omega \longrightarrow {\mathscr L} (V, H)$
are operator-valued stochastic process satisfying (i) in Assumption \ref{ass:3.1};
\item[(iii)]$b, g: [ 0, T ] \times \Omega \times H \times {\mathscr U} \rightarrow H$  are $\mathscr P\times
   \mathscr B(H)\times \mathscr B(\mathscr U)/\mathscr B(H) $ measurable
   mappings and   $\sigma:[0,T]\times  \Omega \times
   E\times H \times \mathscr U \longrightarrow H$ is a $\mathscr P\times \mathscr B(E)\times
   \mathscr B(H)\times \mathscr B(U)/\mathscr B(H)$-measurable mapping  such that $b ( \cdot, 0, 0 ), g ( \cdot, 0, 0 ) \in {
M}^2_{\mathscr F} ( 0, T; H ), \sigma(\cdot, \cdot, 0,0)\in {M}_\mathscr{F}^{\nu,2}{([0,T]\times  E; H)}.$ Moreover, for almost all $( t, \omega, e ) \in [ 0, T ] \times \Omega \times E$,  $h$, $g$ and $\sigma$ are  G\^ateaux differentiable in $(x,u)$ with  continuous bounded G\^ateaux  derivatives
$b_x, g_x,\sigma_x,  b_u, g_u$ and  $\sigma_u$;
\item[(iv)]
$l:[ 0, T ] \times \Omega \times H \times {\mathscr U} \rightarrow H $ is a
 ${\mathscr P} \otimes {\mathscr B} (H) \otimes {\mathscr B} ({\mathscr U})/ {\mathscr B} ({\mathbb R})$-measurable mapping  and  $\Phi: \Omega \times H \rightarrow {\mathbb R}$
is  a ${\mathscr F}_T\otimes {\mathscr B} (H) / {\mathscr B} ({\mathbb R})$-measurable
mapping.
For almost all $( t, \omega ) \in [ 0, T ] \times \Omega$, $l$ is continuous G\^ateaux
differentiable in $(x,u)$
with continuous  G\^ateaux derivatives $l_x$ and $l_u$, $\Phi$
is G\^ateaux differentiable with  continuous
 G\^ateaux derivative $\Phi_x$.
Moreover, for almost all $( t, \omega ) \in [ 0, T ] \times \Omega$,   there exists a   constant $C > 0$ such that  for all $( x, u ) \in H  \times {\mathscr U}$
\begin{eqnarray*}
| l ( t, x, u  ) |
\leq  C ( 1 + \| x \|^2_H + + \| u \|_U^2 ) ,
\end{eqnarray*}
\begin{eqnarray*}
&& \| l_x ( t, x,u) \|_H +
+ \| l_u ( t, x, u ) \|_U \leq C ( 1 + \| x \|_H  + \| u \|_U  ) ,
\end{eqnarray*}
and
\begin{eqnarray*}
& | \Phi (x) | \leq C ( 1 + \| x \|^2_H) , \\
& \| \Phi_x (x) \|_H \leq C ( 1 + \| x \|_H).
\end{eqnarray*}
\end{enumerate}
\end{ass}
In the Gelfand triple $( V, H, V^* ),$
for any admissible control $u(\cdot)\in \cal A,$
we consider  the following
SEE
\begin{eqnarray} \label{eq:4.1}
  \left\{
  \begin{aligned}
   d X (t) = & \ [ A (t) X (t) + b ( t, X (t), u(t)) ] d t
+ [B(t)X(t)+g( t, X (t), u(t)) ]d W(t)
 \\&\quad +\int_E \sigma (t, e,X(t),u(t))\tilde \mu(de,dt),  \\
X (0) = & \  x , \quad t \in [ 0, T ]
  \end{aligned}
  \right.
\end{eqnarray}
with the cost functional
\begin{eqnarray}\label{eq:4.2}
J ( u (\cdot) ) = {\mathbb E} \bigg [ \int_0^T l ( t, x (t), u (t) ) d t
+ \Phi ( x (T) ) \bigg ].
\end{eqnarray}
For any admissible control $u(\cdot),$ the solution of the system { \eqref{eq:4.1}},  denoted by $X^u(\cdot)$ or $X(\cdot),$
  if its dependence on
  admissible control $u(\cdot)$ is clear from  the context,  is called the
state process corresponding to the control process $u(\cdot)$, and
 $(u(\cdot), X(\cdot))$ is called an
admissible pair.

The following result gives the well-posedness of the state equation as well as some useful estimates.

\begin{lem}\label{lem:1.1}
Let Assumption \ref{ass:2.5}  be satisfied. Then for any admissible control $u(\cdot)$,
the state equation
\eqref{eq:4.1}
has a unique solution $X^u(\cdot) \in M_{\mathscr{F}}^2(0,T;V) \bigcap  S_{\mathscr{F}}^2(0,T;H).$
Moreover,  the following estimate holds

\begin{eqnarray}\label{eq:4.3}
\begin{split}
&{\mathbb E} \bigg [ \sup_{0 \leq t \leq T} \|
 X^u (t) \|_H^2 \bigg
]
+ {\mathbb E} \bigg [ \int_0^T \| X^u (t) \|_V^2 d t \bigg ] \leq K \bigg \{ 1+||x||_H + {\mathbb E}
\bigg [ \int_0^T \| u ( t) \|_U^2 d t \bigg ]\bigg \}
\end{split}
\end{eqnarray}
and
\begin{eqnarray}\label{eq:4.4}
\begin{split}
  |J( u(\cdot))|< \infty.
  \end{split}
\end{eqnarray}
Furthermore, let $ X^v(\cdot)$  be
the state process corresponding to
another admissible control $v(\cdot),$
then

   \begin{eqnarray}\label{eq:4.5}
&& {\mathbb E} \bigg [ \sup_{0 \leq t \leq T} \| X^u (t) - { X}^v (t) \|_H^2 \bigg ]
+ {\mathbb E}
\bigg [ \int_0^T \| X^u (t) - { X}^v (t) \|_V^2 d t \bigg ] \leq K {\mathbb E} \bigg [ \int_0^T \| u(t) -v(t) \|_U^2 d t \bigg ] .
\end{eqnarray}
\end{lem}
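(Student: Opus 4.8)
The strategy is to reduce the well-posedness and the estimates \eqref{eq:4.3}--\eqref{eq:4.5} to the results already established in Section 3 for the SEE with random coefficients. The key observation is that once an admissible control $u(\cdot)$ is fixed, the coefficients of the controlled equation \eqref{eq:4.1}, namely $b^u(t,x) \triangleq b(t,x,u(t))$, $g^u(t,x) \triangleq g(t,x,u(t))$ and $\sigma^u(t,e,x) \triangleq \sigma(t,e,x,u(t))$, become random mappings of $(t,\omega,x)$ (and $e$) alone, with no remaining control dependence. Thus the plan is to verify that this triple $(b^u, g^u, \sigma^u)$ satisfies Assumptions \ref{ass:3.1}--\ref{ass:3.2}, and then invoke Theorem \ref{thm:3.1} directly to obtain existence and uniqueness of $X^u(\cdot) \in M_{\mathscr{F}}^2(0,T;V) \cap S_{\mathscr{F}}^2(0,T;H)$.

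So I would start by checking the Lipschitz and measurability hypotheses. Measurability of $b^u, g^u, \sigma^u$ follows from the joint measurability in Assumption \ref{ass:2.5}(iii) composed with the predictability of $u(\cdot)$. For the Lipschitz condition in $x$, the bounded G\^ateaux derivatives $b_x, g_x, \sigma_x$ supply a uniform Lipschitz constant $C$ independent of $u$, via the mean value inequality along the segment joining $x$ and $\bar x$; this gives Assumption \ref{ass:3.2} for the frozen coefficients. Assumption \ref{ass:3.1} on $(A,B)$ is carried over unchanged from Assumption \ref{ass:2.5}(i). Hence Theorem \ref{thm:3.1} applies and yields the unique solution $X^u(\cdot)$.

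Next I would derive the estimate \eqref{eq:4.3} by applying the a priori bound \eqref{eq:3.4} of Theorem \ref{thm:3.2} to $X^u(\cdot)$. The right-hand side of \eqref{eq:3.4} involves $b^u(t,0) = b(t,0,u(t))$, and similarly for $g^u, \sigma^u$. Using the sublinear-growth bounds from Assumption \ref{ass:2.5} together with $b(\cdot,0,0), g(\cdot,0,0) \in M_{\mathscr{F}}^2$ and $\sigma(\cdot,\cdot,0,0) \in M_{\mathscr{F}}^{\nu,2}$, I would estimate, for instance,
\begin{eqnarray*}
\mathbb E\bigg[\int_0^T \|b(t,0,u(t))\|_H^2\,dt\bigg]
\leq 2\,\mathbb E\bigg[\int_0^T \|b(t,0,0)\|_H^2\,dt\bigg]
+ 2C^2\,\mathbb E\bigg[\int_0^T \|u(t)\|_U^2\,dt\bigg],
\end{eqnarray*}
where the control-dependence is absorbed into the $\mathbb E\int_0^T\|u(t)\|_U^2\,dt$ term and the constant $1$ in \eqref{eq:4.3}. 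This delivers \eqref{eq:4.3}. The finiteness \eqref{eq:4.4} of the cost then follows from \eqref{eq:4.3}, the quadratic growth of $l$ and $\Phi$ in Assumption \ref{ass:2.5}(iv), and $u(\cdot)\in M^2(0,T;U)$.

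Finally, for the stability estimate \eqref{eq:4.5} I would view $X^v(\cdot)$ as the solution of the same abstract equation with coefficients $b^v, g^v, \sigma^v$ and apply the continuous-dependence estimate \eqref{eq:3.5}. Here the same initial datum $x$ is used, so $\|x-\bar x\|_H^2 = 0$, and the driving discrepancy is measured through terms such as $\mathbb E\int_0^T \|b(t,X^v(t),u(t)) - b(t,X^v(t),v(t))\|_H^2\,dt$. The main point — and the only place requiring care — is to bound these by $K\,\mathbb E\int_0^T\|u(t)-v(t)\|_U^2\,dt$: this uses the boundedness of the G\^ateaux derivatives $b_u, g_u, \sigma_u$ to obtain Lipschitz continuity of the coefficients in the control variable, again via a mean value estimate. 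I expect this control-Lipschitz step to be the main (though routine) obstacle, since one must confirm the derivative bounds are uniform in $x$ so the constant $K$ is genuinely independent of the states; once that is in hand, \eqref{eq:3.5} closes the argument and yields \eqref{eq:4.5}, completing the proof.
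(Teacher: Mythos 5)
Your proposal is correct and follows essentially the same route as the paper: fix the control, reduce to Theorem \ref{thm:3.1} for existence and uniqueness, and derive \eqref{eq:4.3}, \eqref{eq:4.4} and \eqref{eq:4.5} from the a priori and continuous-dependence estimates \eqref{eq:3.4}--\eqref{eq:3.5} of Theorem \ref{thm:3.2} together with the growth conditions on $l$ and $\Phi$. The paper states this reduction in two lines; you simply make explicit the details it leaves implicit, namely verifying that the frozen coefficients $(b^u,g^u,\sigma^u)$ satisfy Assumptions \ref{ass:3.1}--\ref{ass:3.2} and that boundedness of $b_u,g_u,\sigma_u$ yields Lipschitz continuity in the control variable.
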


\begin{proof}
  Under Assumption \ref{ass:2.5}, by Theorem \ref{thm:3.1}, we  can get directly
  the existence and uniqueness of the  solution of the state equation \eqref{eq:3.1},  and
  the estimates \eqref{eq:4.3} and \eqref{eq:4.5} can be obtained by
  the estimates \eqref{eq:3.5} and \eqref{eq:3.6}, respectively.
  Furthermore, from Assumption
   \ref{ass:2.5} and  the  estimate  \eqref{eq:4.3}, it follows that

   \begin{eqnarray}\label{eq:1.7}
     \begin{split}
       |J(x, u(\cdot))|\leq K \bigg\{\mathbb E \bigg[\sup_{0\leq t\leq T}|| X(t)||_H^2 \bigg] +\mathbb E\bigg[\int_0^T
  ||u(t)||^2_U dt\bigg]+1\bigg]\bigg\} \leq K \bigg\{\mathbb E\bigg[\int_0^T
  ||u(t)||_U^2 dt\bigg]+||x||_H+1\bigg]\bigg\}<\infty.
     \end{split}
   \end{eqnarray}
The proof is complete.
\end{proof}
Therefore, by Lemma \ref{lem:1.1}, we claim  that the cost functional \eqref{eq:4.2}
is well-defined.
Now we put forward our optimal control problem
as follows.

\begin{pro}\label{pro:2.1}
Find an admissible control process ${\bar u} (\cdot) \in {\cal A}$ such that
\begin{eqnarray}\label{eq:b8}
J ( {\bar u}(\cdot) ) = \inf_{u (\cdot) \in {\cal A}} J ( u (\cdot) ) .
\end{eqnarray}
\end{pro}
The admissible control ${\bar u} (\cdot)$ satisfying (\ref{eq:b8}) is called an optimal control process of
Problem \ref{pro:2.1}. Correspondingly, the state process ${\bar X} (\cdot)$ associated with ${\bar u} (\cdot)$
is called an optimal state process. Then $( {\bar u} (\cdot); {\bar X} (\cdot) )$ is called an optimal pair of
Problem \ref{pro:2.1}.

\section{Regularity Result for the Adjoint Equation}

For any admissible pair $( \bar u (\cdot); \bar X(\cdot) )$, the corresponding  adjoint
processes is defined as  the triple of processes $(p(\cdot), q(\cdot), r(\cdot, \cdot)),$ which is
the solution to
the following
backward stochastic equation  with jump,  called
adjoint equation,
\begin{eqnarray}\label{eq:4.4}
\begin{split}
   \left\{\begin{array}{ll}
dp(t)=&-\bigg[A^*(t)p(t)+b_x^*(t, \bar X(t), \bar u(t))p(t)
+B^*(t)q(t)+g_x^{*}(t, \bar X(t), \bar u(t))q(t)
\\&\quad\quad+\displaystyle\int_{{E}}\sigma_x^{*}(t,e,\bar
X(t),\bar u(t))r(t, e)\nu (de)dt
+l_x(t, \bar X(t), \bar u(t))\bigg]dt
\\&\quad\quad+q(t)dW(t)+\displaystyle
\int_{{E}}r(t, e)\tilde{\mu}(de, dt),
~~~~0\leqslant t\leqslant T,
\\p(T)=&\Phi_x(T),
  \end{array}
 \right.
 \end{split}
  \end{eqnarray}
Here $A^*$ denotes the adjoint operator of
the operator $A.$ Similarly, we can define
the corresponding adjoint operator for other operators.\\
Under Assumptions \ref{ass:2.5}, we have the following
basic result on the adjoint processes.

\begin{lem}
 Let Assumptions \ref{ass:2.5}
 be satisfied. Then for any admissible pair
 $( \bar u (\cdot); \bar X (\cdot) ),$
  there exists a unique adjoint processes
  $(p(\cdot), q(\cdot), r(\cdot))\in M_{\mathscr{F}}^2(0,T;H)\times M_{\mathscr{F}}^2(0,T;H)\times M_{\mathscr{F}}^{\nu,2}([0,T]\times E;H).$
  Moreover, the following estimate holds:

  \begin{eqnarray}
    \begin{split}
    & {\mathbb E} \bigg [ \sup_{0 \leq t \leq T} \| p (t) \|_H^2 \bigg
]+ {\mathbb E} \bigg [ \int_0^T \| p (t) \|_V^2 d t \bigg ] +{\mathbb E} \bigg [ \int_0^T \| q (t) \|_H^2 d t \bigg ]
+ {\mathbb E} \bigg [ \int_0^T
\int_{E}\| r (t,e) \|_H^2 \nu(de)d t \bigg ]
\\&\leq  K\bigg\{  {\mathbb E} \bigg [ \int_0^T \| l_x (t) \|_H^2 d t \bigg ]+
\mathbb E[||\Phi_x(T)||_H^2]\bigg\}
    \end{split}
  \end{eqnarray}
\end{lem}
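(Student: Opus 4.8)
The plan is to read \eqref{eq:4.4} as a linear backward stochastic evolution equation (BSEE) of jump type in the Gelfand triple $(V,H,V^*)$ and to solve it by the same two-stage strategy used for the forward equation in Theorem \ref{thm:3.1}: first a ``source-only'' version analogous to Lemma \ref{lem:3.3}, and then a continuation/contraction argument to reinstate the zeroth-order coupling terms. As a preliminary step I would verify that the data live in the right spaces. By Lemma \ref{lem:1.1} the optimal state satisfies $\bar X(\cdot)\in M_{\mathscr F}^2(0,T;V)\cap S_{\mathscr F}^2(0,T;H)$, so the growth bounds in Assumption \ref{ass:2.5}(iv) give $\Phi_x(\bar X(T))\in L^2(\Omega,\mathscr F_T,P;H)$ and $l_x(\cdot,\bar X,\bar u)\in M_{\mathscr F}^2(0,T;H)$; by Assumption \ref{ass:2.5}(iii) the operators $b_x^*, g_x^*, \sigma_x^*, B^*$ are uniformly bounded, and since $\langle A^*x,x\rangle=\langle Ax,x\rangle$ the adjoint $A^*$ inherits the coercivity of Assumption \ref{ass:3.1}.

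For the source-only equation (keep only $A^*p$ in the drift together with a prescribed $f\in M_{\mathscr F}^2(0,T;H)$ and terminal datum $\xi\in L^2(\Omega,\mathscr F_T,P;H)$, dropping $b_x^*p$, $g_x^*q$ and $\int_E\sigma_x^*r\,\nu(de)$), I would prove existence and uniqueness of $(p,q,r)$ by Galerkin approximation. Projecting onto a finite-dimensional subspace spanned by a basis of $V$ reduces the problem to a finite-dimensional linear BSDE with jumps, whose unique solvability follows from the martingale representation theorem for the Brownian/Poisson filtration. Uniform a priori bounds come from applying the It\^o formula of Lemma \ref{lem:c1} to $\|p^{(n)}(t)\|_H^2$ backward from the terminal condition: the coercivity of $A^*$ controls $\int_t^T\|p\|_V^2\,ds$ as well as the martingale contributions $\int_t^T\|q\|_H^2\,ds$ and $\int_t^T\int_E\|r\|_H^2\,\nu(de)ds$ appearing on the right, while Young's inequality and Gr\"onwall's lemma close the estimate; the Burkholder--Davis--Gundy inequality then upgrades the bound to $\sup$-in-$t$ in $H$. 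Passing to the weak limit yields a solution in $M_{\mathscr F}^2(0,T;V)\times M_{\mathscr F}^2(0,T;H)\times M_{\mathscr F}^{\nu,2}([0,T]\times E;H)$.

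To reinstate the bounded zeroth-order terms $b_x^*p+(B^*+g_x^*)q+\int_E\sigma_x^*r\,\nu(de)$, I would run a parameter-continuation argument parallel to the proof of Theorem \ref{thm:3.1}: insert a parameter $\rho\in[0,1]$ in front of these coupling terms, freeze $(p,q,r)$ inside them, solve the source-only equation, and use the a priori estimate to show the resulting map is a strict contraction on $M_{\mathscr F}^2(0,T;H)\times M_{\mathscr F}^2(0,T;H)\times M_{\mathscr F}^{\nu,2}([0,T]\times E;H)$ for $|\rho-\rho_0|$ small, so that one steps from $\rho=0$ to $\rho=1$ in finitely many steps. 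The stated a priori estimate is then the step-two estimate applied to the full generator, with $f$ replaced by $l_x$ and $\xi$ by $\Phi_x(T)$, the coupling terms being absorbed into the $\|p\|_H^2+\|q\|_H^2+\|r\|_H^2$ contributions on the right-hand side (via their boundedness and Young's inequality) before invoking Gr\"onwall.

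The main obstacle is step two: because the equation is backward while the filtration is forward, one cannot integrate in time to produce $(q,r)$, whose very existence is the content of the martingale representation used inside the Galerkin scheme. The delicate point is obtaining a priori estimates uniform in $n$ that \emph{simultaneously} control the $V$-norm of $p$ (through the coercivity of $A^*$) and the martingale integrands $(q,r)$, so that weak compactness furnishes a limit which can then be identified as a genuine solution of the BSEE \eqref{eq:4.4}.
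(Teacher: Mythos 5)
Your route is genuinely different from the paper's: the paper proves this lemma in two lines, observing that the adjoint operators inherit condition (i) of Assumption \ref{ass:3.1} (since $\langle A^*x,x\rangle=\langle Ax,x\rangle$ for all $x\in V$) and then invoking the existence--uniqueness theorem for backward stochastic evolution equations with jumps established in reference [19]. You instead sketch a self-contained proof (Galerkin plus martingale representation for a source-only backward equation, then parameter continuation), and your preliminary reductions --- $\Phi_x(\bar X(T))\in L^2(\Omega,\mathscr F_T,P;H)$, $l_x(\cdot,\bar X,\bar u)\in M_{\mathscr F}^2(0,T;H)$, inherited coercivity of $A^*$ --- are correct.

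However, there is a genuine gap at exactly the point where backward equations in a Gelfand triple are delicate: your treatment of $B^*q$. You list it among the ``bounded zeroth-order terms'' and propose to absorb it ``into the $\|p\|_H^2+\|q\|_H^2+\|r\|_H^2$ contributions\ldots via their boundedness and Young's inequality''. That step fails: $B(t)\in\mathscr L(V,H)$, so $B^*(t)$ maps $H$ into $V^*$, not into $H$, and the cross term produced by the It\^o formula is
\begin{equation*}
2\langle B^*(t)q(t),p(t)\rangle=2\,(q(t),B(t)p(t))_H\le C\,\|q(t)\|_H\,\|p(t)\|_V,
\end{equation*}
which cannot be bounded by $\|q\|_H^2+\|p\|_H^2$. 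The only way to close the estimate is structural: use the $\|Bx\|_H$ term in the coercive inequality of Assumption \ref{ass:3.1} (read with squares, as the proof of Theorem \ref{thm:3.2} confirms is intended), writing $2(q,Bp)_H\le 2\|Bp\|_H^2+\tfrac12\|q\|_H^2$, so that $2\|Bp\|_H^2$ is exactly cancelled by the $-2\|Bp\|_H^2$ coming from $2\langle A^*p,p\rangle$ and the leftover $\tfrac12\|q\|_H^2$ stays strictly below the $+\|q\|_H^2$ that the It\^o formula itself places on the left-hand side; an $\epsilon$-Young with small $\epsilon$ instead produces $\epsilon^{-1}\|Bp\|_H^2$, which the coercivity cannot dominate. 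In other words, $B^*q$ belongs to the principal part of the backward equation, not to the perturbation, and this matters in all three places where you need an a priori bound: in the Galerkin step once the term $\rho_0 B^*q$ with the unknown $q$ is present, in the contraction estimate, and in the final stated inequality. Relatedly, your source-only lemma must be proved for sources in $M_{\mathscr F}^2(0,T;V^*)$ rather than $M_{\mathscr F}^2(0,T;H)$, because the frozen term $(\rho-\rho_0)B^*\hat q'$ that the continuation feeds back as a source is only $V^*$-valued; that frozen term, unlike $B^*q$ itself, is harmless, since $\|B^*\hat q'\|_{V^*}\le C\|\hat q'\|_H$ and it carries the small factor $|\rho-\rho_0|$. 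With these corrections your scheme closes and would constitute a legitimate self-contained alternative to the citation of [19].
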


\begin{proof}
From the property of adjoint operator,
 the adjoint operator $A^*$  of $A$  and the
  adjoint operator $B^*$ of $B$
 also  satisfies (i) in Assumption \ref{ass:3.1}. Therefore,
  the desired result can be obtained
  by the existence and uniqueness  theorem
  of solution of BSEE with jumps established in [19].
\end{proof}
Define the Hamiltonian ${\cal H}: [ 0, T ] \times \Omega \times H  \times {\mathscr U} \times V \times H \times  M^{\nu,2}( E; H)
\rightarrow {\mathbb R}$ by
\begin{eqnarray}\label{eq:5.3}
{\cal H} ( t, x, u, p, q, r ) := \left ( b ( t, x, u ), p \right )_H
+\left( g ( t, x,  u), q \right)_H
+\int_{E}\left( \sigma ( t,e, x,  u), r(t,e) \right)_H\nu(de)
+ l ( t, x, u ) .
\end{eqnarray}
Using Hamiltonian ${\cal H}$,
the adjoint equation \eqref{eq:4.4}
can be written in the following form:

\begin{eqnarray}\label{eq:5.4}
\begin{split}
   \left\{\begin{array}{ll}
dp(t)=&-\bigg[A^*(t)p(t)+B(t)^*q(t)+\bar {\cal H}_{x} (t)\bigg]dt+q(t)dW(t)+\displaystyle
\int_{{E}}r(t, e)\tilde{\mu}(de, dt),
~~~~0\leqslant t\leqslant T,
\\p(T)=&\Phi_x( \bar X(T)),
  \end{array}
 \right.
 \end{split}
  \end{eqnarray}
where we write
\begin{eqnarray}\label{eq:5.6}
\bar {\cal H} (t) \triangleq {\cal H} ( t,
\bar x (t), \bar u (t), p (t), q (t), r(t,\cdot) ).
\end{eqnarray}

\section{  Stochastic Maximum Principle}
\subsection {Variation of the State and Cost Functional}
Let$( {\bar u} (\cdot); {\bar X} (\cdot) )$ be an optimal pair.  Define  a
convex perturbation  of $\bar u(\cdot)$ as follows:
\begin{eqnarray*}
u^\epsilon ( \cdot ) \triangleq {\bar u} ( \cdot ) + \epsilon ( v ( \cdot ) - {\bar u} ( \cdot) ) ,
\quad 0 \leq \epsilon \leq 1,
\end{eqnarray*}
where $v(\cdot)$ is an arbitrarily
admissible control. Since the control domain $\mathscr U$ is
convex,  $u^\eps(\cdot)$ is also an element of
$\cal A.$
 We denote by $ X^\eps(\cdot)$  the state process
corresponding to the control $u^\eps(\cdot).$
Now we introduce the following  first order
variation equation:
\begin{eqnarray}\label{eq:5.8}
\left\{
\begin{aligned}
d Y (t) =& \ [ A (t) Y(t) + b_x ( t, \bar X(t),
\bar u(t) )Y(t) + b_u ( t,\bar X(t), \bar u(t) )(v(t)-\bar u(t)) ] d t
\\&+[B(t) Y(t)
+  g_x ( t,\bar X(t), \bar u(t))Y(t) + g_u ( t,
\bar X(t), \bar u(t) )(v(t)-\bar u(t))] d W (t)
\\& +\int_E\bigg[\sigma_x ( t,e,\bar X(t), \bar u(t) )Y(t)
 +\sigma_u ( t,e,\bar X(t), \bar u(t) )(v(t)-\bar u(t))\bigg]
 \tilde \mu(de,dt), \\
Y(0) =& 0.
\end{aligned}
\right.
\end{eqnarray}
Under Assumption \ref{ass:2.5}, by
Theorem \ref{thm:3.1}, we see that
the variation equation
\eqref{eq:5.8} has a unique
solution  $Y(\cdot)\in M_{\mathscr{F}}^2(0,T;V) \bigcap  S_{\mathscr{F}}^2(0,T;H).$

\begin{lem}\label{lem:6.1}
    Let  Assumption \ref{ass:2.5} be satisfied. Then we have the following estimates:
   \begin{eqnarray}\label{eq:6.2}
{\mathbb E} \bigg [ \sup_{0 \leq t \leq T} \| X^\epsilon (t) - {\bar X} (t) \|^2_H \bigg ]
+ {\mathbb E} \bigg [ \int_0^T \| X^\epsilon (t) - {\bar X} (t) \|^2_V d t \bigg ] = O (\epsilon^2) \ ,
\end{eqnarray}

  \begin{eqnarray}\label{eq:5.5}
{\mathbb E} \bigg [ \sup_{0 \leq t \leq T} \| X^\epsilon (t) - {\bar X} (t)-\eps Y(t) \|^2_H \bigg ]
+ {\mathbb E} \bigg [ \int_0^T \| X^\epsilon (t) - {\bar X} (t)-\eps Y(t) \|^2_V d t \bigg ] = o (\epsilon^2) \ .
\end{eqnarray}
 \end{lem}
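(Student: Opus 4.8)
The plan is to prove both estimates by setting up the appropriate variation processes, applying the a priori continuous-dependence estimate from Theorem~\ref{thm:3.2}, and then closing the resulting inequalities via Gronwall. For the first estimate \eqref{eq:6.2}, I would regard $X^\epsilon(\cdot)$ and $\bar X(\cdot)$ as solutions of the controlled SEE~\eqref{eq:4.1} with controls $u^\epsilon(\cdot)$ and $\bar u(\cdot)$ respectively. Since $u^\epsilon(\cdot)-\bar u(\cdot)=\epsilon\,(v(\cdot)-\bar u(\cdot))$, the estimate \eqref{eq:4.5} of Lemma~\ref{lem:1.1} applied to this pair immediately yields
\begin{eqnarray*}
{\mathbb E} \bigg [ \sup_{0 \leq t \leq T} \| X^\epsilon (t) - {\bar X} (t) \|^2_H \bigg ]
+ {\mathbb E} \bigg [ \int_0^T \| X^\epsilon (t) - {\bar X} (t) \|^2_V d t \bigg ]
\leq K \epsilon^2\, {\mathbb E} \bigg [ \int_0^T \| v(t) - \bar u(t) \|_U^2 d t \bigg ] = O(\epsilon^2),
\end{eqnarray*}
since $v(\cdot),\bar u(\cdot)\in\mathcal A$ have finite $M^2(0,T;U)$ norm. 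This is the easy half.

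For the second estimate \eqref{eq:5.5}, the key is to introduce the remainder process $Z^\epsilon(t)\triangleq X^\epsilon(t)-\bar X(t)-\epsilon Y(t)$ and to write down the SEE it satisfies. Subtracting the equation \eqref{eq:4.1} for $\bar X$ and $\epsilon$ times the variation equation \eqref{eq:5.8} from the equation for $X^\epsilon$, the linear operator terms $A$ and $B$ act on $Z^\epsilon$ directly, so the equation for $Z^\epsilon$ has the structure of a linear SEE with zero initial datum driven by forcing terms of the form
\begin{eqnarray*}
b(t,X^\epsilon,u^\epsilon)-b(t,\bar X,\bar u)-\epsilon\big[b_x(t,\bar X,\bar u)Y + b_u(t,\bar X,\bar u)(v-\bar u)\big],
\end{eqnarray*}
and the analogous expressions for $g$ and $\sigma$. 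The standard device is to expand the finite differences by the mean value / Taylor formula for Gateaux-differentiable maps, writing e.g.
\begin{eqnarray*}
b(t,X^\epsilon,u^\epsilon)-b(t,\bar X,\bar u)
=\int_0^1\! b_x(t,\bar X+\theta(X^\epsilon-\bar X),\ldots)(X^\epsilon-\bar X)\,d\theta
+\int_0^1\! b_u(\cdots)\,\epsilon(v-\bar u)\,d\theta.
\end{eqnarray*}
Adding and subtracting $b_x(t,\bar X,\bar u)$ inside the integrals lets me split the forcing into a part proportional to $Z^\epsilon$ (absorbed by Gronwall) plus a genuinely $o(\epsilon^2)$ remainder. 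Applying the a priori estimate \eqref{eq:3.5} to the $Z^\epsilon$ equation then gives
\begin{eqnarray*}
{\mathbb E}\Big[\sup_{0\le t\le T}\|Z^\epsilon(t)\|_H^2\Big]+{\mathbb E}\Big[\int_0^T\|Z^\epsilon(t)\|_V^2dt\Big]
\le K\,{\mathbb E}\Big[\int_0^T\big(\|\rho_b^\epsilon\|_H^2+\|\rho_g^\epsilon\|_H^2+\|\rho_\sigma^\epsilon\|^2_{M^{\nu,2}}\big)dt\Big],
\end{eqnarray*}
where each $\rho_\phi^\epsilon$ collects the Taylor remainders.

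The main obstacle, and the step deserving the most care, is showing that the right-hand remainder is $o(\epsilon^2)$ rather than merely $O(\epsilon^2)$. This requires exploiting continuity of the Gateaux derivatives $b_x,b_u,g_x,g_u,\sigma_x,\sigma_u$ together with the first estimate \eqref{eq:6.2}: each remainder integrand is dominated by a product of a bounded factor and a difference of derivatives $b_x(t,\bar X+\theta(X^\epsilon-\bar X),\cdot)-b_x(t,\bar X,\cdot)$ evaluated along the perturbation, which tends to zero in probability as $\epsilon\to 0$ by \eqref{eq:6.2}, and then one invokes the boundedness of the derivatives to apply the dominated convergence theorem after dividing by $\epsilon^2$. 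Concretely, writing the squared forcing norm in terms of $\|X^\epsilon-\bar X\|^2/\epsilon^2=O(1)$ times a vanishing continuity factor, I would pass to the limit to conclude the quotient of the remainder by $\epsilon^2$ tends to zero. Care is needed to interchange limit and expectation uniformly in the time and space integrations, and to handle the jump term $\sigma$ in the $M^{\nu,2}$ norm by the same argument under the integral over $E$; this is the only genuinely delicate point, the rest being routine linear SEE estimates.
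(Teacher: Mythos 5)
Your proposal matches the paper's own argument essentially step for step: the first estimate follows from \eqref{eq:4.5} with $u^\epsilon-\bar u=\epsilon(v-\bar u)$, and the second is obtained by forming the remainder process $\Xi^\epsilon=X^\epsilon-\bar X-\epsilon Y$, writing its linear SEE with integral-form Taylor remainders $\alpha^\epsilon,\beta^\epsilon,\gamma^\epsilon$, applying the a priori estimate of Theorem \ref{thm:3.2}, and using boundedness/continuity of the G\^ateaux derivatives together with \eqref{eq:6.2} and dominated convergence to show the remainders are $o(\epsilon^2)$. Your write-up is in fact somewhat more explicit than the paper's about the one delicate point (why the remainder is $o(\epsilon^2)$ rather than $O(\epsilon^2)$), so no substantive difference or gap to report.
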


 \begin{proof}
   From the estimate \eqref{eq:4.5},
   we have

   \begin{eqnarray}
   \begin{split}
{\mathbb E} \bigg [ \sup_{0 \leq t \leq T} \| X^\eps (t) - { \bar X} (t) \|_H^2 \bigg ]
+ {\mathbb E}
\bigg [ \int_0^T \| X^\eps (t) - { \bar X} (t) \|_V^2 d t \bigg ]
 &\leq K {\mathbb E} \bigg [ \int_0^T \| u^\eps(t) -\bar u(t) \|_U^2 d t \bigg ]
 \\&=K\eps^2 {\mathbb E} \bigg [ \int_0^T \| v(t) -\bar u(t) \|_U^2 d t \bigg ]
 \\&=O(\eps^2).
 \end{split}.
\end{eqnarray}
Set $\Xi^\eps(t)= X^\eps(t)-\bar X(t)-\eps Y(t).$
From   Taylor expanding  , we have
\begin{eqnarray} \label{eq:6.5}
  \left\{
  \begin{aligned}
   d \Xi^\eps (t) = & \ [ A (t) \Xi^\eps (t)
   +b_x(t, \bar X(t), \bar u(t))\Xi^\eps (t)
   + \alpha^\eps(t)]dt
   + [B(t)\Xi^\eps(t)+g_x(t, \bar X(t), \bar u(t))\Xi^\eps (t)
   + \beta^\eps(t) ]d W(t)\\
   &+\int_{E}\bigg[\sigma_x(t,e, \bar X(t), \bar u(t))\Xi^\eps (t)
   + \gamma^\eps(t) \bigg]d \tilde\mu(de,dt),  \\
X (0) = & \  x , \quad t \in [ 0, T ],
  \end{aligned}
  \right.
\end{eqnarray}
 \end{proof}
 where
 \begin{eqnarray} \label{eq:5.16}
\left\{
\begin{aligned}
\alpha^\eps(t)= &\int_0^1\bigg[\big(b_x( t, \bar X (t)+\lambda(X^\eps(t)-\bar X(t)), \bar u (t)+\lambda(u^\eps(t)-\bar u(t)))-b_x(t, \bar X(t), \bar u(t))\big)(X^\eps(t)-\bar X(t))
   \\&+\big(b_u( t,\bar X (t)+\lambda(X^\eps(t)-\bar X(t)),\bar u (t)+\lambda(u^\eps(t)-\bar u(t)))-b_u(t, \bar X(t), \bar u(t))\big)(u^\eps(t)-\bar u(t))\bigg]d\lambda,
   \\
   \beta^\eps(t)= &\int_0^1\bigg[\big(g_x( t, \bar X (t)+\lambda(X^\eps(t)-\bar X(t)), \bar u (t)+\lambda(u^\eps(t)-\bar u))-g_x(t, \bar X(t), \bar u(t))\big)(X^\eps(t)-\bar X(t))
   \\&+\big(g_u( t,\bar X (t)+\lambda(X^\eps(t)-\bar X(t)),\bar u (t)+\lambda(u^\eps(t)-\bar X))-g_u(t, \bar X(t), \bar u(t))\big)(u^\eps(t)-\bar u(t))\bigg]d\lambda,\\
   \gamma^\eps(t,e)= &\int_0^1\bigg[\big(\sigma_x( t, e, \bar X (t)+\lambda(X^\eps(t)-\bar X(t)), \bar u (t)+\lambda(u^\eps(t)-\bar u(t))-
   \sigma_x(t,e, \bar X(t), \bar u(t))\big)(X^\eps(t)-\bar X(t))
   \\&+\big(\sigma_u( t,e,\bar X (t)+\lambda(X^\eps(t)-\bar X),\bar u (t)+\lambda(u^\eps-\bar u(t)))-
   \sigma_x(t, e,\bar X(t), \bar u(t))\big)(u^\eps(t)-\bar u(t))\bigg]d\lambda.
\end{aligned}
\right.
\end{eqnarray}
 From the estimates \eqref{eq:3.4},
 \eqref{eq:6.2} and  Lebesgue dominated convergence theorem, we get that

 \begin{eqnarray}
&&{\mathbb E} \bigg [ \sup_{0 \leq t \leq T} \| \Xi(t)\|^2_H \bigg ]
+ {\mathbb E} \bigg [ \int_0^T \| \Xi(t) \|^2_V d t \bigg ]
\\&\leq& \mathbb E\bigg[ \int_0^T || \alpha^\eps(t)||^2_Hdt  \bigg]
+\mathbb E\bigg[\int_0^T | |\beta^\eps(t)||^2_Hdt\bigg]
 +\mathbb E\bigg[ \int_0^T \int_{E}
 ||\gamma^\eps(t,e)||^2_H\nu (de)dt \bigg] \nonumber \\
&=& \quad o(\eps).
\end{eqnarray}

 \begin{lem}
    Let  Assumption \ref{ass:2.5} be satisfied.                Let $( {\bar u} (\cdot); {\bar X} (\cdot) )$ be
 an optimal pair of
 Problem \ref{pro:2.1} associated with
  the first order variation process $Y(\cdot).$ Then,

  \begin{eqnarray}\label{eq:6.9}
  \begin{split}
    J(u^\eps(\cdot))-J(\bar u(\cdot))=&
    \eps\mathbb E\bigg[(\Phi_x( \bar X(T)), Y(T))_H\bigg]
      + \eps\mathbb E\bigg[\int_0^T
      ( l_x(t,\bar X(t), \bar u(t) ), Y(t))_Hdt\bigg]
      \\&+ \eps\mathbb E\bigg[\int_0^T
      ( l_u(t, \bar X(t), \bar u(t)), v(t)-u(t))_Udt\bigg]+o(\eps)
  \end{split}
\end{eqnarray}
 \end{lem}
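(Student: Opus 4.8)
The plan is to expand the cost difference directly from its definition \eqref{eq:4.2} and extract the term linear in $\eps$. Writing
\[
J(u^\eps(\cdot))-J(\bar u(\cdot)) = \mathbb E\bigg[\int_0^T \big(l(t,X^\eps(t),u^\eps(t))-l(t,\bar X(t),\bar u(t))\big)\,dt + \Phi(X^\eps(T))-\Phi(\bar X(T))\bigg],
\]
I would apply the fundamental theorem of calculus (Taylor expansion with integral remainder) to each difference, using the G\^ateaux differentiability of $l$ and $\Phi$ granted by Assumption \ref{ass:2.5}. This rewrites the running-cost difference as an integral over $\lambda\in[0,1]$ of $(l_x(\cdots),X^\eps-\bar X)_H+(l_u(\cdots),u^\eps-\bar u)_U$, with the derivatives evaluated at the interpolated argument $(\bar X+\lambda(X^\eps-\bar X),\bar u+\lambda(u^\eps-\bar u))$, and similarly for the terminal term with $\Phi_x$.

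Next I would substitute $u^\eps-\bar u=\eps(v-\bar u)$ exactly, and decompose $X^\eps-\bar X=\eps Y+\Xi^\eps$ where, by the estimate \eqref{eq:5.5} of Lemma \ref{lem:6.1}, the remainder satisfies $\mathbb E[\sup_{0\le t\le T}\|\Xi^\eps(t)\|_H^2]=o(\eps^2)$. The target identity \eqref{eq:6.9} should then emerge once the interpolated arguments of $l_x,l_u,\Phi_x$ are replaced by their values at the optimal pair $(\bar X,\bar u)$. The leftover error terms split naturally into two classes: those in which the bounded derivatives are paired against the higher-order remainder $\Xi^\eps$, and those in which the first-order terms $\eps Y$ and $\eps(v-\bar u)$ are paired against the \emph{differences} of derivatives $l_x(\text{interp})-l_x(\bar X,\bar u)$, and likewise for $l_u$ and $\Phi_x$.

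The main obstacle is verifying that all of these error terms are genuinely $o(\eps)$. For the first class the argument is routine: the uniform boundedness of $l_x,l_u,\Phi_x$ (Assumption \ref{ass:2.5}), combined with the Cauchy--Schwarz inequality and the $o(\eps^2)$ bound on $\Xi^\eps$, controls them by $o(\eps)$. The second class is the delicate part. Here I would exploit that, by the estimate \eqref{eq:6.2}, the interpolated point converges to $(\bar X,\bar u)$ in $L^2(\Omega\times[0,T])$ as $\eps\to0$, so that along a subsequence the derivative differences converge to zero a.e.; the continuity and boundedness of the G\^ateaux derivatives then let me invoke the Lebesgue dominated convergence theorem (the boundedness supplying an integrable dominating majorant) to conclude that, after dividing by $\eps$, these cross terms tend to zero. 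Collecting the surviving first-order contributions produces precisely the three terms in \eqref{eq:6.9}, completing the proof.
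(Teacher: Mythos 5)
Your proposal is correct and follows essentially the same route as the paper: Taylor expansion with integral remainder, the substitution $X^\eps-\bar X=\eps Y+\Xi^\eps$ with the two estimates of Lemma \ref{lem:6.1} controlling the remainder, and a dominated-convergence argument to kill the terms pairing derivative differences at interpolated points against the first-order quantities. Your write-up is in fact more careful than the paper's (which contains typos such as $b_x,b_u$ in place of $l_x,l_u$); the only caveat is that Assumption \ref{ass:2.5} gives $l_x,l_u,\Phi_x$ linear growth rather than boundedness, so the dominating majorant should come from uniform integrability of $\|X^\eps\|_H^2$ rather than a uniform bound.
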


\begin{proof}

From the definition of the cost functional, we have
\begin{eqnarray} \label{eq:6.10}
J(u^\eps(\cdot))- J(\bar u(\cdot))
={\mathbb E} \bigg [ \int_0^T
\Big(l ( t, X^\eps (t), u^\eps (t) ) - l ( t, \bar X(t), \bar u (t) \Big) d t\bigg ]
+ \mathbb E\bigg[ \Phi ( X^\eps (T) )
-\Phi ( \bar X (T) ) \bigg ]=I_1+I_2 \end{eqnarray}

Let us concentrate on $I_1, $
in terms of Lemma \ref{lem:6.1} and
the control convergence theorem, we have

\begin{eqnarray} \label{eq:6.11}
  \begin{split}
    I_1= &\mathbb E\bigg[\int_0^T\int_0^1\big(b_x( t, \bar X (t)+\lambda(X^\eps(t)-\bar X(t)), \bar u (t)+\lambda(u^\eps(t)-\bar u(t)))-b_x(t, \bar X(t), \bar u(t))\big)(X^\eps(t)-\bar X(t))d\lambda dt\bigg]
   \\&+ E\bigg[\int_0^T\int_0^1\big(b_u( t,\bar X (t)+\lambda(X^\eps(t)-\bar X(t)),\bar u (t)+\lambda(u^\eps(t)-\bar u(t)))-b_u(t, \bar X(t), \bar u(t))\big)(u^\eps(t)-\bar u(t)) dt\bigg]
   \\&+\mathbb E\bigg[\int_0^Tb_x(t, \bar X(t), \bar u(t))\big)\Xi^\eps(t)dt\bigg]
   +\eps \mathbb E\bigg[\int_0^Tb_x(t, \bar X(t), \bar u(t))\big)Y(t)d\lambda dt\bigg]
   \\&+ \eps E\bigg[\int_0^T b_u(t, \bar X(t), \bar u(t))\big)(u(t)-\bar u(t)) dt\bigg]
  \\ =& \eps \mathbb E\bigg[\int_0^Tb_x(t, \bar X(t), \bar u(t))Y(t)d\lambda dt\bigg]+ \eps E\bigg[\int_0^T b_u(t, \bar X(t), \bar u(t))(u(t)-\bar u(t)) dt\bigg]
   \\&+o(\eps),
  \end{split}
\end{eqnarray}

Similarly,  we have

\begin{eqnarray} \label{eq:6.12}
  \begin{split}
    I_1= \eps \mathbb E\bigg[\Phi _x(\bar X(T))Y(T)\bigg]+o(\eps),
  \end{split}
\end{eqnarray}

Then putting \eqref{eq:6.11} and
\eqref{eq:6.12} into \eqref{eq:6.10},
we get \eqref{eq:6.9}. The proof is complete.

\end{proof}

\subsection{Main Results}
Now we are in position to  state and prove the maximum principle for our optimal control problem.
\begin{thm}[{\bf Maximum Principle}]
\label{thm:4.3}
Let  Assumption \ref{ass:2.5} be satisfied.
Let $( {\bar u} (\cdot); {\bar X} (\cdot) )$ be an optimal pair of
Problem \ref{pro:2.1} associated with
the adjoint processes $( { p} (\cdot), {
q} (\cdot), r(\cdot, \cdot) ).$
Then the following minimum condition holds:
\begin{eqnarray}\label{eq:4.9}
\big( {\cal H}_u (t,\bar X(t-), \bar u(t),p(t-), q(t),
r(t,\cdot)), v - {\bar u} (t) \big)_U \geq 0 ,\quad \quad\forall v \in {\mathscr U}, for~ a.e.~t \in [ 0, T ], {\mathbb P}-a.s.
\end{eqnarray}
\end{thm}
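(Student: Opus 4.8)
The plan is to extract the minimum condition from the sign of the first-order expansion of the cost functional, after converting the $Y$-dependent terms into control-dependent ones via a duality computation. Since $(\bar u(\cdot);\bar X(\cdot))$ is optimal and $u^\eps(\cdot)\in{\cal A}$ for every $\eps\in[0,1]$, optimality gives $J(u^\eps(\cdot))-J(\bar u(\cdot))\geq 0$. Dividing the expansion \eqref{eq:6.9} by $\eps$ and letting $\eps\downarrow 0$ yields the variational inequality
\begin{equation*}
\mathbb{E}\big[(\Phi_x(\bar X(T)),Y(T))_H\big]+\mathbb{E}\Big[\int_0^T(l_x(t,\bar X(t),\bar u(t)),Y(t))_H\,dt\Big]+\mathbb{E}\Big[\int_0^T(l_u(t,\bar X(t),\bar u(t)),v(t)-\bar u(t))_U\,dt\Big]\geq 0
\end{equation*}
for every admissible $v(\cdot)$.

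Next I would apply the It\^o formula of Lemma \ref{lem:c1}, in its polarized product form, to $(p(t),Y(t))_H$, inserting the dynamics of $Y$ from \eqref{eq:5.8} and of $p$ from \eqref{eq:5.4}. Integrating over $[0,T]$ and taking expectation annihilates the $dW$ and $\tilde\mu$ martingale terms; because $Y(0)=0$ and $p(T)=\Phi_x(\bar X(T))$, the boundary term is precisely $\mathbb{E}[(\Phi_x(\bar X(T)),Y(T))_H]$. The key cancellations, all consequences of the definition of the adjoint operators, are: $-\la A^*p,Y\ra$ against $\la AY,p\ra$ (this pairing is legitimate because the adjoint regularity estimate places $p(\cdot)\in M_{\mathscr F}^2(0,T;V)$); $-(B^*q,Y)_H$ against the Brownian covariation term $(q,BY)_H$; and the parts $b_x^*p$, $g_x^*q$, $\int_E\sigma_x^*r\,\nu(de)$ of $\bar{\cal H}_x$ against $(p,b_xY)_H$, $(q,g_xY)_H$ and $\int_E(r,\sigma_xY)_H\nu(de)$ respectively. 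What remains is the duality identity
\begin{equation*}
\mathbb{E}[(\Phi_x(\bar X(T)),Y(T))_H]+\mathbb{E}\Big[\int_0^T(l_x,Y)_H\,dt\Big]=\mathbb{E}\Big[\int_0^T\Big\{(p,b_u(v-\bar u))_H+(q,g_u(v-\bar u))_H+\int_E(r,\sigma_u(v-\bar u))_H\nu(de)\Big\}dt\Big].
\end{equation*}

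Substituting this into the variational inequality and invoking the definition \eqref{eq:5.3} of the Hamiltonian, the integrand collapses to $({\cal H}_u(t,\bar X(t),\bar u(t),p(t),q(t),r(t,\cdot)),v(t)-\bar u(t))_U$, using $(p,b_uw)_H=(b_u^*p,w)_U$ and the analogous identities for $g_u,\sigma_u$; hence
\begin{equation*}
\mathbb{E}\Big[\int_0^T({\cal H}_u(t,\bar X(t),\bar u(t),p(t),q(t),r(t,\cdot)),v(t)-\bar u(t))_U\,dt\Big]\geq 0\quad\text{for all }v(\cdot)\in{\cal A}.
\end{equation*}
To pass to the pointwise statement \eqref{eq:4.9}, I would localize: for fixed $w\in{\mathscr U}$ and an arbitrary measurable set $G\subset[0,T]\times\Omega$, the perturbation $v(t)=\bar u(t)+\mathbf 1_G(t)(w-\bar u(t))$ is admissible by convexity of ${\mathscr U}$; arbitrariness of $G$ forces the integrand to be nonnegative a.e., and a countable dense exhaustion of ${\mathscr U}$ removes the $w$-dependence of the exceptional null set. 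The left limits $\bar X(t-),p(t-)$ in \eqref{eq:4.9} arise merely from taking predictable versions of the integrands and agree with $\bar X(t),p(t)$ for a.e.\ $t$.

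The step I expect to be the main obstacle is the rigorous product It\^o formula in the Gelfand-triple setting with jumps together with the careful bookkeeping of the duality pairings: one must check that all integrability hypotheses of Lemma \ref{lem:c1} hold, so that the $dW$- and $\tilde\mu$-integrals are true martingales with zero expectation, and that the $A$--$A^*$ pairing is well-defined, which is exactly where the $V$-regularity of $p$ supplied by the adjoint estimate is essential. By comparison, the final localization is routine.
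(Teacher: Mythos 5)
Your proposal is correct and follows essentially the same route as the paper: the first-order expansion \eqref{eq:6.9} of $J(u^\eps(\cdot))-J(\bar u(\cdot))$, the duality identity obtained by applying It\^o's formula to $(p(t),Y(t))_H$ (the paper's \eqref{eq:6.14}), and substitution via the definition of ${\cal H}$ to reach the integrated inequality. Your final localization step (indicator perturbations $v=\bar u+\mathbf 1_G(w-\bar u)$ plus a countable dense exhaustion of ${\mathscr U}$) is in fact more careful than the paper, which disposes of this passage with the remark that $v(\cdot)$ is arbitrary.
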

\begin{proof}
Applying It\^o formula to
     $(p(t), Y(t))_H$ leads to
  \begin{eqnarray} \label{eq:6.14}
    \begin{split}
        &\mathbb E[(\Phi_x( \bar X(T)),
         Y(T))_H ]
      + \mathbb E\bigg[\int_0^T
      ( l_x(t,\bar X(t), \bar u(t) ), Y(t))_Hdt\bigg]
      \\=& \mathbb E\bigg[\int_0^T
      \bigg(v(t)-\bar u(t),  b_u^*(t,\bar X(t), \bar u(t))p(t)
      + g_u^*(t,\bar X(t), \bar u(t))q(t)
      +\int_{E}
      \sigma_u^*(t,e,\bar X(t), \bar u(t))r(t,e)\nu(de)\bigg)_Udt\bigg].
    \end{split}
  \end{eqnarray}

Since $\bar u(\cdot) $ is the optimal
control,
from \eqref{eq:6.9} and the duality relation
\eqref{eq:5.3}, we have
\begin{eqnarray}\label{eq:5.6}
  \begin{split}
  0\leq&   \lim_{\eps\longrightarrow 0}\frac {J(u^\eps(\cdot))-J(\bar u(\cdot))}{\eps}
 \\ =&\mathbb E[\big(\Phi_x( \bar X(T)),
         Y(T)\big)_H ]
      + \mathbb E\bigg[\int_0^T
      ( l_x(t,\bar X(t), \bar u(t) ), Y(t))_Hdt\bigg]+ \mathbb E\bigg[\int_0^T
      ( l_u(t, \bar X(t), \bar u(t)), v(t)-u(t))_Udt\bigg]
      \\=&\mathbb E\bigg[\int_0^T
      \bigg(v(t)-\bar u(t),  b_u^*(t,\bar X(t), \bar u(t))p(t)
      + g_u^*(t,\bar X(t), \bar u(t))q(t)
      +\int_{E}
      \sigma_u^*(t,e,\bar X(t), \bar u(t))r(t,e)\nu(de)\bigg)_Udt\bigg]
      \\&+\mathbb E\bigg[\int_0^T
      ( l_u(t, \bar X(t), \bar u(t)), v(t)-\bar u(t))_Udt\bigg]
      \\=&E\bigg[\int_0^T
      \bigg(v(t)-\bar u(t), {\cal H}_u (t,\bar X(t), \bar u(t),p(t), q(t),
r(t,\cdot)) )_Udt\bigg].
  \end{split}
\end{eqnarray}
This imply the minimum condition
\eqref{eq:4.9} holds  since $v(\cdot)$
is arbitrary given admissible control.
  \end{proof}

\section{Verification  Theorem }

In the following, we give  the sufficient condition of optimality for the existence of an optimal control of
Problem \ref{pro:2.1}.

\begin{thm}[{\bf Verification  Theorem}]\label{thm:4.4}
Let  Assumption \ref{ass:2.5} be satisfied.
Let $( {\bar u} (\cdot); {\bar X} (\cdot) )$ be an admissible pair of
Problem \ref{pro:2.1} associated with
the adjoint processes $(  p(\cdot),
q (\cdot ), r(\cdot, \cdot)).$ Suppose that 
${\cal H} ( t, x, u, { p} (t), { q} (t), r(t,\cdot) )$ is convex in $( x, u)$,
and $\Phi (x)$ is convex in $x$, moreover assume
that  the following  optimality condition holds for almost all $( t, \omega ) \in [ 0, T ] \times \Omega$:
\begin{eqnarray} \label{eq:4.10}
 {\cal H} ( t, {\bar X} (t),
\bar u(t), { p} (t), {q} (t), r(t,\cdot)) = \min_{ u \in {\mathscr U} } {\cal H} ( t, {\bar x} (t),
u, { p} (t), {q} (t), r(t,\cdot)).
\end{eqnarray}
 Then $( {\bar u} (\cdot); {\bar X} (\cdot) )$
is an optimal pair of Problem \ref{pro:2.1}.
\end{thm}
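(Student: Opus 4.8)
The plan is to prove sufficiency directly. I fix an arbitrary admissible control $u(\cdot)\in\mathcal A$ with associated state $X(\cdot)=X^u(\cdot)$ and aim to show $J(u(\cdot))-J(\bar u(\cdot))\ge 0$. Writing $\hat X(t)\triangleq X(t)-\bar X(t)$ (so $\hat X(0)=0$) and abbreviating $\Delta\phi(t)\triangleq\phi(t,X(t),u(t))-\phi(t,\bar X(t),\bar u(t))$ for $\phi=b,g,\sigma,l$, I first split
\[
J(u(\cdot))-J(\bar u(\cdot))=\mathbb{E}\Big[\int_0^T\Delta l(t)\,dt\Big]+\mathbb{E}\big[\Phi(X(T))-\Phi(\bar X(T))\big].
\]
Since $\Phi$ is convex with Gâteaux derivative $\Phi_x$ and the adjoint equation \eqref{eq:5.4} enforces $p(T)=\Phi_x(\bar X(T))$, the terminal term is bounded below by $\mathbb{E}[(p(T),\hat X(T))_H]$; it then remains to control this inner product together with the running-cost difference.

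The heart of the argument is to compute $\mathbb{E}[(p(T),\hat X(T))_H]$ by applying the bilinear (integration-by-parts) form of the Itô formula of Lemma \ref{lem:c1} to the pairing $(p(t),\hat X(t))_H$, using the dynamics of $\hat X$ (the difference of the two copies of the state equation \eqref{eq:4.1}) and the adjoint dynamics \eqref{eq:5.4}. Because $p(\cdot)\in M_{\mathscr F}^2(0,T;V)$ by the adjoint estimate, the duality pairings $\langle A\hat X,p\rangle$ and $\langle B^*q,\hat X\rangle$ are well defined, and the operator drift terms cancel: $\langle A\hat X,p\rangle-\langle A^*p,\hat X\rangle=0$, while the $B^*q$ drift cancels against the Brownian covariation $(q,B\hat X)_H$. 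After taking expectations (the martingale integrals having zero mean by the estimate \eqref{eq:4.3} and the adjoint estimate), one is left with
\[
\mathbb{E}\big[(p(T),\hat X(T))_H\big]=\mathbb{E}\bigg[\int_0^T\Big((p(t),\Delta b(t))_H+(q(t),\Delta g(t))_H+\int_E(r(t,e),\Delta\sigma(t,e))_H\,\nu(de)-(\bar{\mathcal H}_x(t),\hat X(t))_H\Big)dt\bigg].
\]
By the definition \eqref{eq:5.3} of the Hamiltonian, the first three terms equal $\mathcal H(t,X(t),u(t),p,q,r)-\bar{\mathcal H}(t)-\Delta l(t)$, where $\bar{\mathcal H}(t)=\mathcal H(t,\bar X(t),\bar u(t),p(t),q(t),r(t,\cdot))$; substituting back and cancelling the $\Delta l$ contributions gives
\[
J(u(\cdot))-J(\bar u(\cdot))\ge\mathbb{E}\bigg[\int_0^T\Big(\mathcal H(t,X(t),u(t),p,q,r)-\bar{\mathcal H}(t)-(\bar{\mathcal H}_x(t),\hat X(t))_H\Big)dt\bigg].
\]

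To finish I invoke the two remaining hypotheses. Convexity of $\mathcal H(t,\cdot,\cdot,p(t),q(t),r(t,\cdot))$ in $(x,u)$ yields $\mathcal H(t,X,u,\cdots)-\bar{\mathcal H}(t)\ge(\bar{\mathcal H}_x(t),\hat X(t))_H+(\bar{\mathcal H}_u(t),u(t)-\bar u(t))_U$, which reduces the bound to $J(u(\cdot))-J(\bar u(\cdot))\ge\mathbb{E}[\int_0^T(\bar{\mathcal H}_u(t),u(t)-\bar u(t))_U\,dt]$. Finally, since $\bar u(t)$ minimizes the differentiable convex map $u\mapsto\mathcal H(t,\bar X(t),u,\cdots)$ over the convex set $\mathscr U$ by the optimality condition \eqref{eq:4.10}, the first-order variational inequality $(\bar{\mathcal H}_u(t),u(t)-\bar u(t))_U\ge 0$ holds for a.e.\ $t$, $\mathbb P$-a.s. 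Hence the integrand is nonnegative and $J(u(\cdot))\ge J(\bar u(\cdot))$, proving that $(\bar u(\cdot);\bar X(\cdot))$ is optimal.

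I expect the main obstacle to be the rigorous justification of the Itô product formula for $(p,\hat X)_H$: Lemma \ref{lem:c1} is stated for the squared norm $\|Y\|_H^2$, whereas here I need its bilinear (polarized) version for two $H$-valued semimartingales driven by the same $W$ and $\tilde\mu$. The delicate points are the jump cross-term, whose compensator contributes exactly the $\int_E(r,\Delta\sigma)_H\,\nu(de)\,dt$ piece matching the Hamiltonian, and the verification that the resulting stochastic integrals are genuine martingales (so their expectations vanish), which relies on the square-integrability of $X,\bar X,p,q,r$ provided by \eqref{eq:4.3} and the adjoint estimate.
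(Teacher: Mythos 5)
Your proposal is correct and follows essentially the same route as the paper: both express the cost difference via the Hamiltonian, apply the It\^o product (duality) formula to $(p(t),X(t)-\bar X(t))_H$ so the $A$/$A^*$ and $B$/$B^*$ terms cancel, and then conclude from convexity of ${\cal H}$ and $\Phi$ together with the first-order variational inequality $(\bar{\cal H}_u(t),u(t)-\bar u(t))_U\geq 0$ derived from the minimum condition. The only cosmetic difference is that you invoke the convexity of $\Phi$ before the duality computation rather than after, and you are more explicit than the paper about the need to justify the polarized It\^o formula and the martingale property of the stochastic integrals.
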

\begin{proof}
Let  $( {u} (\cdot); {X} (\cdot) )$  be an any   given admissible pairs. To simplify our notation, we define the following  shorthand notations:
\begin{eqnarray} \label{eq:6.1}
\begin{split}
& b (t) \triangleq b ( t, X (t), u (t)), {\bar b} (t) \triangleq b ( t, {\bar X} (t), {\bar u} (t)), \\
& g (t) \triangleq g ( t, X (t), u (t)), {\bar g} (t) \triangleq g ( t, {\bar X} (t) ,{\bar u} (t)),
\\
& \sigma (t,e) \triangleq \sigma ( t,e, X (t), u (t)), {\bar \sigma} (t) \triangleq \sigma ( t,
e,{\bar X} (t) ,{\bar u} (t)),
\\
& {\cal H} (t) \triangleq {\cal H} ( t, X (t), u (t), {p} (t), {q} (t), r(t,\cdot) ),
\\
& { \bar{\cal H}} (t) \triangleq {\cal H} ( t,
\bar X (t), \bar u (t), {p} (t), {q} (t), r(t,\cdot) ) .
\end{split}
\end{eqnarray}
From the definitions of the cost functional
  $J(u(\cdot))$ and the Hamiltonian  ${\cal H}$ (see \eqref{eq:4.2} and \eqref{eq:5.3}),
  we can represent $J(u(\cdot))-J(\bar u(\cdot))$ as follows:
\begin{eqnarray}\label{eq:4.10}
J ( u (\cdot) ) - J ( {\bar u} (\cdot) )
&=& {\mathbb E} \bigg [ \int_0^T \bigg \{ {\cal H} (t) - {\bar {\cal H}} (t)
- ( {\bar p} (t), b(t) - {\bar b} (t) )_H \nonumber - ( {\bar q} (t), g (t) - {\bar g} (t) )_H
\\&&-\int_{E}( {\bar r} (t,e), \sigma (t,e) - {\bar \sigma} (t,e) )_H  \nu(de) \bigg \} d t \bigg ]+ {\mathbb E} \bigg[ \Phi ( X (T))
- \Phi ( {\bar X} (T) )\bigg] .
\end{eqnarray}
\end{proof}

 In terms of  the state equation \eqref{eq:3.1}, we can check that
$X(\cdot)-\bar X(\cdot)$ satisfies the
following SEE:
\begin{eqnarray}\label{eq:5.10}
\left\{
\begin{aligned}
d (X (t)-\bar X(t)) =& \ [  A (t) (X(t)
 -\bar X(t))+ b ( s )-\bar b ( s ) ] d t
 +[ B(t)(X(t)-\bar X(t))+ g ( s )-\bar g(s) )] d W (t)
 \\&+\int_{E}[\sigma ( s,e )-\bar \sigma(s,e) )] d \tilde \mu (de,t) , \quad t \in [ 0, T ] , \\
X(0)-\bar X(0) =& \ 0.
\end{aligned}
\right.
\end{eqnarray}
Then recalling the
 adjoint equation \eqref{eq:4.4}  and applying  It\^o's formula to $( {p} (t), X (t) - {\bar X} (t) )_H$, we  get that
\begin{eqnarray}\label{eq:4.12}
&&{\mathbb E}\bigg [ \int_0^T  \bigg \{ ( { p} (t), b (t) - {\bar b} (t) )_H
+ ( {q} (t), g (t) - {\bar g} (t) )_H
+\int_{E}( { r} (t,e), \sigma (t,e) - {\bar \sigma} (t,e) )_H  \nu(de)
\bigg \} d t \bigg ] \nonumber \\
~~~~~~~~~&=& {\mathbb E} \bigg [ \int_0^T ( {\bar {\cal H}}_x (t) , X (t) - {\bar X} (t) )_H d t \bigg ]+ {\mathbb E} \Big[ (\Phi_{x} ( \bar X  (T)), X (T) - {\bar X} (T) )_H \Big] .
\end{eqnarray}
Then substituting \eqref{eq:4.12} into \eqref{eq:4.10} leads to

\begin{eqnarray}\label{eq:6.6}
J ( u (\cdot) ) - J ( {\bar u} (\cdot) )
&=& {\mathbb E} \bigg [ \int_0^T \bigg \{ {\cal H} (t) - {\bar {\cal H}} (t)
- ( {\bar {\cal H}}_x (t),
 X(t) - {\bar X} (t) )_H \bigg \} d t \bigg ] \nonumber \\
&& + {\mathbb E} [ \Phi ( X (T) ) - \Phi ( {\bar X} (T) )
- ( \Phi_x ( {\bar X} (T) ), X(T) - {\bar x} (T) )_H ] .
\end{eqnarray}
On the other hand,   the convexity of
${\cal H}(t)$ and $\Phi (x)$ leads to
\begin{eqnarray}\label{eq:7.7}
{\cal H} (t) - {\bar {\cal H}} (t) &\geq& ( {\bar {\cal H}}_x (t), X (t) - {\bar X} (t) )_H
+( {\bar {\cal H}}_{u} (t), u ( t ) - {\bar u} ( t) )_U ,
\end{eqnarray}
and
\begin{eqnarray}\label{eq:5.4}
\Phi ( X (T) ) - \Phi ( {\bar X} (T) ) \geq ( \Phi_x ( {\bar X} (T) ), x (T) - {\bar x} (T) )_H .
\end{eqnarray}
In addition,  the optimality condition \eqref{eq:4.10}  and the convex optimization principle
(see Proposition 2.21 of [8] ) yield  that  for almost all $( t, \omega ) \in
[ 0, T ] \times \Omega$,
\begin{eqnarray}\label{eq:5.5}
( {\bar {\cal H}}_u (t), u (t) - {\bar u} (t) )_U\geq 0 .
\end{eqnarray}
Then putting \eqref{eq:7.7}, \eqref{eq:5.4} and \eqref{eq:5.5} into \eqref{eq:6.6}, we get that
\begin{eqnarray}
 J ( u (\cdot) ) - J ( {\bar u} (\cdot) )\geq 0.
\end{eqnarray}
Therefore, since $u(\cdot)$ is arbitrary,  ${\bar u} (\cdot)$ is an optimal control process and $( {\bar u} (\cdot); {\bar x} (\cdot) )$ is an optimal pair.  The proof is
complete.

\section{Application}
We provide an example to which our results solve.  We consider a controlled  Cauchy problem, where the system
is given by a stochastic partial differential equation driven by
Brownian motion $W$
and Poisson random
martingale $\tilde \mu$ in
divergence form:
\begin{eqnarray}\label{eq:6.13}
\left\{
\begin{aligned}
d y ( t,z ) = & \ \big \{ \partial_{z^i} [ a^{ij} ( t, z )
\partial_{z^j} y ( t, z ) ]
+ b^i ( t, z ) \partial_{z^i} y ( t, z )+ c ( t, z ) y ( t, z ) + u ( t, z ) \big \} d t \\
& + \{\partial_{z^i} [\eta^i( t, z )y ( t, z )]  +\rho ( t, z ) y ( t, z )+
u ( t, z ) \} d W (t)
+ \int_{E}[  \Gamma ( t, e,z ) y ( t, z )+
u ( t, z ) ] \tilde \mu(de,dt) ,\\
y ( 0, z ) = & \ \xi(z)\in \mathbb R^d \quad ( t, z ) \in [ 0, T ] \times {\mathbb R}^d,
\end{aligned}
\right.
\end{eqnarray}
with a quadratic cost functional
\begin{eqnarray}\label{eq:8.2}
{\mathbb E} \bigg [ \int_{{\mathbb R}^d} y^2 ( T, z ) d z + \iint_{[
0, T ] \times {{\mathbb R}^d}} y^2 ( s, z ) d z  d s + \iint_{[ 0, T
] \times {{\mathbb R}^d}} u^2 ( s, z ) d z d s \bigg ] .
\end{eqnarray}
Here the unknown  $y(t,z,\omega)$, representing the state of the system, is a real-valued
process, the control is a predictable real-valued process $u(t,z,\omega).$
The coefficients $a$, $b$, $c$, $\eta$, $\rho$, $\Gamma$ are given  random functions satisfying
the following assumptions, for some fixed constants $K \in ( 1,
\infty )$ and $\kappa \in ( 0,1 )$:

\begin{ass}\label{ass:6.3}
The functions $a$, $b$, $c$, $\eta$, and $\rho$  are
${\mathscr P} \times {\mathscr B} ({\mathbb R}^d )$-measurable with
values in the set of real symmetric $d \times d$ matrices ${\mathbb
R}^{d}$, ${\mathbb R}$, ${\mathbb R}^d$
and  ${\mathbb R}$, respectively, and are bounded by $K$.
The function $\Gamma$ are
${\mathscr P}
\times {\mathscr B} (E)\times {\mathscr B} ({\mathbb R}^d )$-measurable with
value ${\mathbb
R}$ and is bounded by $K$. $\xi\in L^2 ( {\mathbb R}^d).$
\end{ass}

\begin{ass}\label{ass:6.4}
The super-parabolic condition holds, i.e.,
\begin{eqnarray*}
\kappa I+\eta( t, z )(\eta( t, z ))^*\leq 2 a ( t, \omega, z ) \leq K I , \quad \forall ( t, \omega, z ) \in
[ 0, T ] \times \Omega \times {\mathbb R}^d ,
\end{eqnarray*}
where $I$ is the $(d \times d)$-identity matrix.
\end{ass}

Now we  begin to transform
\eqref{eq:6.13} into a SEE  with  jump  in
the form of \eqref{eq:3.1}. To this end,
let us recall some preliminaries of Sobolev spaces.
For $m = 0, 1$, we define the space $H^m \triangleq \{ \phi:
\partial_z^\alpha \phi \in L^2 ( {\mathbb R}^d ), \ \mbox {for any}
\ \alpha: =( \alpha_1, \cdots, \alpha_d ) \ \mbox {with} \ |\alpha|
:= | \alpha_1 | + \cdots + | \alpha_d | \leq m \}$ with the norm
\begin{eqnarray*}
\| \phi \|_m \triangleq \left \{ \sum_{ |\alpha| \leq m } \int_{{\mathbb R}^d}
| \partial_z^\alpha \phi (z) |^2 d z \right \}^{\frac{1}{2}} .
\end{eqnarray*}
We denote by $H^{-1}$ the dual space of $H^1$. We set $V = H^1$, $H
= H^0$, $V^* = H^{-1}$. Then $( V, H, V^* )$ is a Gelfand triple.

In our case,  we assume control domain ${\mathscr U} = U = H$. The admissible control
set $\cal A$  is defined as  $ { M}^2_{\mathscr F} ( 0, T; U ).$
Set
\begin{eqnarray*}
&&X(t)\triangleq y(t, \cdot),
\\&& (A (t) \phi) (z) \triangleq  \partial_{z^i} [ a^{ij} ( t, z )
\partial_{z^j} \phi (z) ]
+b^i ( t, z ) \partial_{z^i} \phi (z)+c ( t, z ) \phi (z)  , \quad \forall \phi \in V , \\
&&( B (t) \phi) (z) \triangleq \partial_{z^i}[\eta^i (t,z)\phi (z)]+\rho ( t, z ) \phi ( z ) ,\quad \forall \phi \in V,
\\&& b(t,\phi, u)\triangleq u, \quad \forall \phi \in H, u\in {\cal U},
\\&& g(t,\phi, u)\triangleq u, \quad \forall \phi \in H, u\in {\cal U},
\\&& \sigma(t,e,\phi, u)\triangleq\Gamma ( t, e,\cdot ) \phi+u,  \quad\forall \phi \in H, u\in \cal U,
 \\&& l(t,\phi, u)\triangleq  (\phi, \phi)_H+
 (u, u)_{U},  \quad\forall \phi \in H, u\in \cal U,
 \\&& \Phi(\phi)\triangleq (\phi, \phi)_H, \quad\forall \phi \in H.
\end{eqnarray*}

In the Gelfand triple $( V, H, V^* )$, using the above notations, we can rewrite the state equation \eqref{eq:6.13} as follows:
\begin{eqnarray} \label{eq:8.3}
  \left\{
  \begin{aligned}
   d X (t) = & \ [ A (t) X (t) + b ( t, X (t), u(t)) ] d t
+ [B(t)X(t)+g( t, X (t), u(t)) ]d W(t)
 \\&\quad +\int_E \sigma (t, e,X(t),u(t))\tilde \mu(de,dt),  \\
X (0) = & \  x , \quad t \in [ 0, T ],
  \end{aligned}
  \right.
\end{eqnarray}
and the cost functional \eqref{eq:8.2}
can be  rewritten as
\begin{eqnarray}\label{eq:8.4}
J ( u (\cdot) ) = {\mathbb E} \bigg [ \int_0^T l ( t, x (t), u (t) ) d t
+ \Phi ( x (T) ) \bigg ].
\end{eqnarray}
where we set

\begin{eqnarray}
\begin{split}
  &l(t,x,u)\triangleq(x, x)_H+(u, u)_H, \forall x\in H, u\in U,
\\&
\Phi(x)\triangleq(x,x)_H, \forall x\in H.
\end{split}
\end{eqnarray}

Thus this optimal control problem is  transformed  into Problem
\ref{pro:2.1} as a special case.
Under Assumptions \ref{ass:6.3}-\ref{ass:6.4},
 it  is easy to check that the coefficients of  this optimal control problem  satisfy
Assumptions \ref{ass:2.5}.  So  in this
case,  Theorem \ref{thm:4.3} and
\ref{thm:4.4} hold.   More precisely, the corresponding Hamiltonian ${\cal H}$ becomes
\begin{eqnarray}\label{eq4.2}
{\cal H} ( t, x, u, p, q, r ) := \left ( u, p \right )_H
+\left(u , q \right)_H
+\int_{E}\left( \Gamma ( t, e,\cdot ) x+u, r(t,e) \right)_H\nu(de)
+ (x,x)_H+(u,u)_H .
\end{eqnarray}
The adjoint equation  associated with  the optimal pair $( \bar u (\cdot); \bar X (\cdot) )$ becomes

\begin{eqnarray}\label{eq:8.6}
\begin{split}
   \left\{\begin{array}{ll}
dp(t)=&\bigg[A^*(t)p(t)
+B^*(t)q(t)+\displaystyle\int_{{E}}
\Gamma^*(t,e)r(t, e)\nu (de)dt
+ 2 X(t)\bigg]dt
\\&\quad\quad+q(t)dW(t)+\displaystyle
\int_{{E}}r(t, e)\tilde{\mu}(de, dt),
~~~~0\leqslant t\leqslant T,
\\p(T)=&\Phi_x(T),
  \end{array}
 \right.
 \end{split}
  \end{eqnarray}
where
\begin{eqnarray*}
 &&A^*(t) \phi (z) \triangleq - \partial_{z^i} [ a^{ij}
( t, z ) \partial_{z^j} \phi (z) ] + \partial_{z^i}[b^i ( t, z )
\phi (z)] + c ( t, z ) \phi (z) , \quad \forall \phi \in V,\\
&&B^*(t) \phi (z) \triangleq   -\eta^i (t,z)\partial_{z^i}\phi (z),  \quad \forall  \phi \in H,\\
&&\Gamma^*(t,e) \phi (z) \triangleq   \Gamma(t,e,z)\phi (z),  \quad \forall  \phi \in H.
\end{eqnarray*}
Since ${\mathscr U} = U$, there is no constraint on the control and  the minimum condition \eqref{eq:4.9}
\begin{eqnarray}
{\cal H}_u (t,\bar X(t-), \bar u(t),p(t-), q(t),
r(t,\cdot))= 0 .
\end{eqnarray}
which imply that
\begin{eqnarray}\label{eq6.4}
&& 2 {\bar u} (t) + { p} (t) + q (t)
+\int_{E} r(t,e)\nu(de)= 0,
\end{eqnarray}a.e. $t \in [ 0, T ]$, ${\mathbb P}$-a.s..
 Thus  the optimal control ${\bar u}
(\cdot)$ is given by
\begin{eqnarray*}
{\bar u} (t) = -\frac{1}{2} \bigg [ {\bar p} (t) + {\bar q} (t)+\int_{E} r(t,e)\nu(de)\bigg] .
\end{eqnarray*}

\vspace{1mm}


\begin{thebibliography}{10}


\item Albeverio, S., Wu, J. L., \& Zhang, T. S. (1998). Parabolic SPDEs driven by Poisson white noise.
    {\it Stochastic Processes and their Applications,} 74(1), 21-36.




\item Bao, J., \& Yuan, C. (2016). Blow-up for Stochastic Reaction-Diffusion Equations with Jumps.
    {\it Journal of Theoretical Probability, } 29(2), 617-631.

\bibitem{Ben830}
Bensoussan, A. (1983).
\newblock Stochastic maximum principle for distributed parameter systems.
\newblock {\em Journal of the Franklin Institute}, 315:387--406.






\item Budhiraja, A., Dupuis, P., \& Ganguly, A. (2016). Moderate deviation principles for stochastic differential equations with jumps.
     {\it The Annals of Probability,} 44(3), 1723-1775.



\bibitem{Chow07}
Chow, P. L. (2014). {\it Stochastic partial differential equations.} CRC Press.


\item Da Prato, G., \& Zabczyk, J. (2014). {\it Stochastic equations in infinite dimensions.} Cambridge university press.




\item Du, K., \& Meng, Q. (2013). A maximum principle for optimal control of stochastic evolution equations.
     {\it SIAM Journal on Control and Optimization,} 51(6), 4343-4362.





\item Ekeland, I., \& Temam, R. (1976). Convex analysis and 9 variational problems. Siam.



\item Fuhrman, M., Hu, Y.,
\& Tessitore, G. (2013). Stochastic maximum principle for optimal control of SPDEs.
{\it Applied Mathematics \& Optimization,} 68(2), 181-217.



\item Fournier, N. (2000). Malliavin calculus for parabolic SPDEs with jumps.
     {\it Stochastic processes and their applications,} 87(1), 115-147.




     \item Fuhrman, M., \& Orrieri, C. (2016). Stochastic maximum principle for optimal control of a class of nonlinear SPDEs with dissipative drift.
          {\it SIAM Journal on Control and Optimization,} 54(1), 341-371.




\item Guatteri, G. (2011). Stochastic maximum principle for SPDEs with noise and control on the boundary.
{\it Systems \& Control Letters,} 60(3), 198-204.




\item Gy\"{o}ngy, I., \& Krylov, N. V. (1982). On stochastics equations with respect to semimartingales ii. itô formula in banach spaces. {\it Stochastics,} 6(3-4), 153-173.

\item Hu, Y. (1991). N-person differential games governed by semilinear stochastic evolution systems. {\it Applied Mathematics and Optimization,} 24(1), 257-271.




\item Hu, Y., \& Peng, S. (1990). Maximum principle for semilinear stochastic evolution control systems.
{\it Stochastics and Stochastic Reports,} 33(3-4), 159-180.



\item Lenhart, S., Xiong, J., \& Yong, J. (2016). Optimal Controls for Stochastic Partial Differential Equations with an Application in Population Modeling.
    {\it SIAM Journal on Control and Optimization,} 54(2), 495-535.






\item L\"{u}, Q., \& Zhang, X. (2014). General Pontryagin-type stochastic maximum principle and backward stochastic evolution equations in infinite dimensions. Springer.


\item Mahmudov, N. I., \& McKibben, M. A. (2007). On backward stochastic evolution equations in Hilbert spaces and optimal control. {\it Nonlinear Analysis: Theory, Methods \& Applications,} 67(4), 1260-1274.


\item  Meng, Q., \& Shen, Y., 2016. Stochastic Optimal Controls and
Stochastic Hamilton-Jacobi-Bellman Equations with Jumps and Random Coefficient, \emph{preprint.}




\item Meng, Q., \& Shi, P. (2013). Stochastic optimal control for backward stochastic partial differential systems.
    {\it Journal of Mathematical Analysis and Applications,} 402(2), 758-771.





\item Mueller, C. (1998). The heat equation with L\'{e}vy noise 1.
    {\it Stochastic Processes and Their Applications,} 74(1), 67-82.




\item Mytnik, L. (2002). Stochastic partial differential equation driven by stable noise. {\it Probability Theory and Related Fields,} 123(2), 157-201.


\item {\O}ksendal, B., Proske, F., \& Zhang, T. (2005). Backward stochastic partial differential equations with jumps and application to optimal control of random jump fields. {\it Stochastics An International Journal of Probability and Stochastic Processes}, 77(5), 381-399.








\item Pr\'{e}v\^{o}t, C., \& R\"{o}ckner, M. (2007). {\it A concise course on stochastic partial differential equations (Vol. 1905).} Berlin: Springer


\item R\"{o}ckner, M., \& Zhang, T. (2007). Stochastic evolution equations of jump type: existence, uniqueness and large deviation principles. {\it Potential Analysis,} 26(3), 255-279.






\item Yang, X., Zhai, J.,\& Zhang, T. (2015). Large deviations for SPDEs of jump type. {\it Stochastics and Dynamics,} 15(04), 1550026.


\item Yu, H., \& Liu, B. (2013). Optimal control of backward stochastic heat equation with Neumann boundary control and noise.
    {\it Stochastics An International Journal of Probability and Stochastic Processes,} 85(3), 532-558.








\item Zhao, H., \& Xu, S. (2016). Freidlin-Wentzell's Large Deviations for Stochastic Evolution Equations with Poisson Jumps. {\it Advances in Pure Mathematics,} 6(10), 676.

\item Zhai, J., \& Zhang, T. (2015). Large deviations for 2-D stochastic Navier–Stokes equations driven by multiplicative L\'{e}vy noises. {\it Bernoulli,} 21(4), 2351-2392.


\item Zhou, X. Y. (1993). On the necessary conditions of optimal controls for stochastic partial differential equations. {\it SIAM journal on control and optimization,} 31(6), 1462-1478.

\end{thebibliography}
\end{document}